\newenvironment{propbis}[1]
{%
	\begin{proposition}}
	{\end{proposition}}
\begin{document}
	\begin{frontmatter}
		\title{Roughness of geodesics in Liouville quantum gravity}
		\runtitle{Roughness of geodesics in LQG}  
		\runtitle{Roughness of geodesics in LQG\;\;\;\;\;\;\;}

		\begin{aug}
			\author[A]{\fnms{Zherui} \snm{Fan}\ead[label=e1]{1900010670@pku.edu.cn}}
			\and
			\author[B]{\fnms{Subhajit} \snm{Goswami}\ead[label=e2]{goswami@math.tifr.res.in}}
			
			\runauthor{Fan Z. and Goswami S.}
			
			\affiliation{Peking University and Tata Institute of Fundamental Research 
			}

			\address[A]{School of Mathematical Sciences\\ 
				Peking University\\
				No.5, Yiheyuan Road\\
				Beijing, P.R.China 100087\\
				\href{mailto:1900010670@pku.edu.cn}{1900010670@pku.edu.cn}\\
				\phantom{as}}
			\address[B]{School of Mathematics\\
				Tata Institute of Fundamental Research\\
				1, Homi Bhabha Road\\
				Colaba, Mumbai 400005, India\\
				\href{mailto:goswami@math.tifr.res.in}{goswami@math.tifr.res.in}\\
				\phantom{ac}}
			
		\end{aug}

		\begin{abstract}
			The metric associated with the Liouville quantum gravity (LQG) surface has been constructed through a series of recent works 
			and several properties of its associated geodesics have been studied. In the current article we 
			confirm the folklore conjecture that the Euclidean Hausdorff dimension of LQG geodesics is stirctly greater than 1 for all values of the so-called Liouville first passage percolation (LFPP) parameter $\xi$.
			We deduce this from a general criterion due to Aizenman and Burchard \cite{AizBur99} which in our case amounts to {\em 
				near}-geometric bounds on the probabilities of certain crossing events for LQG geodesics in the number of 
			crossings. We obtain  such bounds using the axiomatic characterization of the LQG metric 
			after proving a special regularity property for the Gaussian free field (GFF). We also prove an 
			analogous result for the LFPP geodesics.
		\end{abstract}

\begin{keyword}
Liouville quantum gravity (LQG), Liouville first passage percolation (LFPP), Gaussian free field (GFF), Random 
metrics, Random curves, Hausdorff dimension
\end{keyword}
\end{frontmatter}
	
	\section{Introduction}
	\subsection{Background and motivation}\label{sec:background}
	Liouville quantum gravity (LQG) is a one-parameter family of random fractal surfaces that was first studied by physicists in the 1980 as a class of canonical models of random two-dimensional Riemannian manifolds 
	\cite{Pol81, Dav88, distler1989conformal}. 
	We refer the reader to \cite{GwynneAMS20, berestycki2021gaussian} for a mathematical introduction to this topic. 
	
	\vspace{0.1cm}

	{\em Formally} speaking, a $\gamma$-Liouville quantum gravity ($\gamma$-LQG) surface is a random 
	``Riemannian manifold'' with Riemannian metric tensor $\e^{\gamma h} ds^2$ where $h$ is a variant of the 
	{\em Gaussian free field} (GFF) on some domain $U \subset \mathbb C$ and $\gamma \in (0, 2]$ is the 
	underlying parameter. This is of course not well-defined as the GFF is a random Schwartz distribution (see, e.g.,~\cite{She07, berestycki2021gaussian, werner2020lecture} for a comprehensive introduction to the 
	GFF). Rigorous mathematical investigation into this surface as a random metric measure 
	space was set off with the construction of the associated volume measure in \cite{DupShe11} which is a special instance of the 
	Gaussian multiplicative chaos (GMC) developed by Kahane in 1985 \cite{Kahane85} 
	(see~\cite{RhodesVargas14} for a current introduction to this subject). 
	
	\vspace{0.1cm}
	
	On the metric side, Miller and Sheffield constructed the $\sqrt{8/3}$-LQG metric through a series of 
	papers \cite{MSbrownianmapI,MSbrownianmapII,MSbrownianmapIII} where they established a 
	deep link with the {\em Brownian map}. 
	For a general $\gamma \in (0, 2)$ (the so-called {\em subcritical phase}), the metric was recently 
	constructed  as a culmination of several works due to Ding, Dub\'{e}dat, Dunlap, Falconet, Gwynne, Miller, Pfeffer and Sun \cite{DingDubDunFalc20, DubFalcGwynPfeSun20, GwynneMillerconcon2021, 
		GwynneMiller2021}. Their construction starts by producing candidate distance functions which are obtained 
	as subsequential limits \cite{DingDubDunFalc20} 
	of a family of random metrics 
	known as the {\em Liouville first passage percolation} (LFPP). The limiting metric is then shown to be unique 
	in two stages. Firstly, every possible subsequential limit is shown to be a measurable function of the GFF $h$ satisfying a list of axioms motivated by the natural properties and scaling behavior of the LFPP metrics \cite{DubFalcGwynPfeSun20}. Subsequently 
	these axioms are shown to uniquely characterize a random metric \cite{GwynneMillerconcon2021, GwynneMiller2021} on the plane. See the ICM article by Ding, Dub\'{e}dat and Gwynne 
	\cite{ding2021introduction} for a detailed overview of the construction of LQG using LFPP.
	
	
	\vspace{0.1cm}
	
	Associated with the LFPP metric is a parameter $\xi = \xi(\gamma)$ which gives a reparametrization of the 
	LQG metric although its explicit dependence on $\gamma$ is currently unknown (see~\cite{DingGwynne20, ding2021introduction}). There is yet another parametrization of LQG which is perhaps more popular in the 
	physics community, namely the {\em matter central charge} ${\mb c}_{{\rm M}}$. The subcritical phase 
	(i.e., $\gamma \in (0, 2)$) corresponds to ${\mb c}_{{\rm M}} \in (-\infty, 1)$ whereas the critical ($\gamma = 2$) and the supercritical phases ($\gamma$ complex, $|\gamma| = 2$) correspond to ${\mb c}_{{\rm M}}  = 
	1$ and ${\mb c}_{{\rm M}} \in (1, 25)$ respectively. See \cite{ding2021introduction} and \cite[Section~1]{ding2021uniqueness} to get a better understanding of the interplay between these 
	different parameters. Of these three phases, the supercritical phase is the most mysterious, not least because $\gamma$ is complex. However, one can still assign a LFPP parameter $\xi > 0$ to 
	such $\gamma$ \cite{ding2020tightness} and consider subsequential limits of LFPP metrics like before. This 
	program was carried out in a series of works by Ding, Gwynne and Pfeffer \cite{ding2020tightness, 
		pfeffer2021weak,ding2021uniqueness} and hence the LQG metric is now defined for all values of $\xi \in (0, 
	\infty)$. 
	
	\medskip
	
	In parallel to these works, many properties of the LQG metric and its associated geodesics (which are known to exist, see Section~\ref{sec:lqgdef} below) have been 
	studied and discovered in recent times; see \cite{ding2021introduction} for an overview. Of particular 
	relevance to the current article is the paper \cite{GwynPfefShef22} where it was proved (see~Theorem~1.8), 
	amongst several other results, that there is a deterministic constant $\Delta_{{\rm geo}} = \Delta_{{\rm geo}} 
	(\gamma) > 0$ for any $\gamma \in (0, 2)$ (equivalently $\xi \in (0, \xi_{{\rm crit}} \equiv \xi(2))$) such that a.s. the Euclidean Hausdorff dimension of every $\gamma$-LQG geodesic started from $0$ is equal to $\Delta_{{\rm geo}}$.  Let us note at this point that the dimension of a geodesic is always equal to 1 w.r.t. the 
	metric for which it is a geodesic, i.e., the LQG metric in this case. Not much is known about the precise value 
	of $\Delta_{{\rm geo}}$ except for an upper bound given in \cite[Corollary~1.10]{GwynnePfeffer19}. 
	
	\vspace{0.15cm}
	
	As to the {\em lower} bound on the dimension, it is expected ``$\ldots$that it is strictly greater than 1'' where the quoted text is from Ding, Dub\'{e}dat and Gwynne's ICM paper \cite{ding2021introduction} (see 
	Section~3.3).  A closely related question was addressed in a work by Ding and Zhang \cite{DZ19} where 
	they showed under a positivity assumption on the distance exponent for a variant of LFPP that the {\em Euclidean length} of any LFPP-geodesic joining two macroscopically separated points diverges, with high 
	probability, as a (negative) power of $\varepsilon$ as $\varepsilon \to 0$ where $\varepsilon$ is the 
	``regularization'' parameter for LFPP (see, e.g., \cite{ding2021introduction} and Section~\ref{subsec:LFPP_intro} below for a precise definition). But this property of the exponent is 
	currently known to hold only for very small values of $\xi$ from \cite{DingGos19} and for $\xi \in 
	(0.266\ldots, 1/\sqrt{2})$  from \cite[Theorem~2.3]{GwynnePfeffer19}. In the present article we 
	confirm this conjecture for the LQG geodesics corresponding to any value of $\xi \in (0, \infty)$. We also 
	extend the result of Ding and Zhang to {\em all} values of $\xi$ for a variant of LFPP.
	
	\subsection{Lower bound on the dimension of LQG geodesics}\label{subsec:LQG_geod_intro}
	Based on the discussion in the previous subsection, we know that the LQG metric is a measurable function 
	of the underlying field $h$ which we will refer to as $D_h$ in the sequel while keeping its dependence on 
	the LFPP parameter $\xi$ implicit. Whenever we speak of ``a $D$-geodesic'' for some metric $D$ in this paper, we mean a geodesic associated with $D$ connecting two points $z, w \in \mathbb C$ (see 
	Section~\ref{sec:lqgdef} below for a definition). We can now state our main result on LQG geodesics. 
	\begin{theorem}\label{thm:main}
		Let $h$ be 
		the whole-plane GFF and $D_h$ be the associated $\xi$-LQG metric. 
		Then for each $\xi > 0$, with probability 1,  the Euclidean Hausdorff dimension of any $D_h$-geodesic 
		is strictly greater than 1.
	\end{theorem}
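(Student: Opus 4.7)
My plan is to verify the hypotheses of the Aizenman--Burchard criterion \cite{AizBur99}, which yields a strict lower bound on the Hausdorff dimension of a random curve once one has near-geometric decay, in the number $k$ of crossings, of the probability that the curve crosses an arbitrary annulus at least $k$ times. Writing $A(z, r, R) = B(z, R) \setminus B(z, r)$, it will therefore suffice to establish, uniformly in $z, r, R$ with $r/R \le 1/2$ (say) and for all $k \ge 1$, an estimate of the form
\begin{equation*}
\mathbb{P}\!\left(\text{a } D_h\text{-geodesic crosses } A(z, r, R) \text{ at least } k \text{ times}\right) \;\le\; C_1\, \rho^k,
\end{equation*}
where a ``crossing'' is a sub-arc of the geodesic joining the inner and outer boundaries of the annulus, $\rho = \rho(R/r) \in (0,1)$, and $C_1$ is independent of $k$.

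To obtain this geometric-in-$k$ bound, I would subdivide $A(z, r, R)$ into $k$ nested disjoint concentric sub-annuli $A_1, \ldots, A_k$; any geodesic making $k$ crossings of $A(z, r, R)$ must in particular make a ``double crossing'' of every $A_j$. Within each $A_j$ I would construct a \emph{blocking event} $E_j$, measurable (up to small error) with respect to $h|_{A_j^+}$ for a slight enlargement $A_j^+$, on which the $D_h$-distance across $A_j$ is so large that no geodesic can afford to re-enter $A_j$ after having crossed it once. Combining the locality and Weyl covariance $D_{h+c} = e^{\xi c}D_h$ from the axiomatic characterization of $D_h$, the event $E_j$ at scale $R_j$ can be transported to a single fixed-scale blocking event of positive probability. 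Decomposing the GFF on each sub-annulus into its harmonic extension from the complement and an independent zero-boundary part via the domain Markov property, the events $E_j$ in disjoint sub-annuli can be made \emph{approximately independent} after subtracting off the harmonic extension; this is the point where the ``special regularity property'' of the GFF promised in the abstract enters, giving the quantitative control needed on the harmonic parts.

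The principal obstacle, I expect, is obtaining a conditional lower bound of the form $\mathbb{P}(E_j \mid \mathcal{F}_j) \ge q > 0$, valid on a high-probability event, where $\mathcal{F}_j$ captures the field outside $A_j^+$. Because $D_h$ depends globally on $h$ through the multiplicative factor $e^{\xi h}$, even after truncating on boundedness of the harmonic extension one must show that the blocking event can be engineered from local fluctuations of $h$ whose metric effect persists regardless of the (now bounded) outer configuration. The Weyl covariance already absorbs additive constants, but uniform control on realization-dependent oscillations of the harmonic extension requires precisely the kind of uniform modulus-of-continuity / smallness estimate on the GFF restricted to an annulus that the paper advertises as a key input. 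Once this uniform positive lower bound on each $\mathbb{P}(E_j \mid \mathcal{F}_j)$ is in hand, iterating across the $k$ sub-annuli yields a bound of order $(1-q)^{k-1}$ on the $k$-crossing probability, which upon translating back into the form required by \cite{AizBur99} delivers the desired strict lower bound on $\dim_{\mathrm H}$ of any $D_h$-geodesic.
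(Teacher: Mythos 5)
Your plan hinges on establishing the full Aizenman--Burchard hypothesis, namely a uniform geometric decay $C_1\rho^k$ in the number $k$ of traversals of a single annulus $A(z,r,R)$ of fixed aspect ratio. This is exactly the step the authors say they \emph{cannot} carry out for $D_h$-geodesics (``Unfortunately, we can not prove this condition for LQG geodesics in full generality and it is not clear to us whether this is even true''), and the mechanism you propose does not deliver it. If you subdivide $A(z,r,R)$, with $r/R\le 1/2$ fixed, into $k$ nested disjoint concentric sub-annuli, the aspect ratio of each sub-annulus tends to $1$ as $k\to\infty$. The blocking event you want in a sub-annulus — essentially an ``around beats across'' comparison, which is what the paper also uses in the definitions of $G_{z,r}$ and $\widetilde F_{z,r}$ — has probability going to $0$ as the annulus degenerates, so the lower bound $q$ cannot be taken uniform in $k$ and the iterated bound $(1-q)^{k}$ does not close. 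Put differently: you can only fit $O(\log(R/r))$ sub-annuli of bounded aspect ratio inside $A(z,r,R)$; this gives a decay in $\log(R/r)$, not in the number of crossings $k$. For a geodesic there is also no BK-type inequality available to turn ``$k$ disjoint traversals'' into independence of $k$ fixed-scale blocking events.

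The paper sidesteps this by working not with repeated crossings of one annulus, but with \emph{collections of disks at multiple scales} (the $(\lambda,\nu)$-balanced families of Definition~\ref{def:goodcollection}), which arise naturally after the hierarchical straight-runs decomposition of \cite{AizBur99}. For such a family the authors prove only a \emph{near}-geometric bound $C\rho^{m\sqrt{\nu}/\log(e\nu)}$ (Proposition~\ref{prop:crossingprob}) — weaker than $\rho^n$ because the harmonic extensions $\widehat h_{z_j,r_j}$ across different disks are correlated, and the loss is controlled via the variance/Fernique/Borell--Tsirelson chain of Lemmas~\ref{lem:variance_estimate} and~\ref{lem:max_dev}. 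They then feed this weaker bound directly into the straight-runs Theorem~\ref{thm:hausdorff}, rather than into the general dimension criterion from display~(1.15) of \cite{AizBur99}. So while your ingredients (Markov decomposition of $h$, Weyl covariance, around-vs-across blocking events, regularity of the harmonic part) do match the paper's, the scaffolding you hang them on — geometric decay for repeated crossings of a single annulus — is a step that appears to be out of reach and that the paper deliberately replaces.
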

	\begin{remark}[Other variants of GFF]\label{remark:GFF_variant}
		Although we stated Theorem~\ref{thm:main} for the whole-plane GFF only, the result also holds for zero 
		boundary GFF on proper subdomains of $\mathbb C$ by local absolute continuity. See 
		Section~\ref{sec:lqgdef} below.
	\end{remark}
	
	\smallskip
	
	Broadly speaking, the identifying characteristic of a rough random curve is its propensity to deviate from a 
	straight line segment in multiple scales. This can be manifested, for example, in the rapid decay of 
	simultaneous crossing probabilities of thin rectangles (or annuli) by the curve. Aizenman and Burchard 
	showed in their celebrated work \cite{AizBur99} that a geometric decay of such crossing probabilities 
	in the number of rectangles (see~\cite[display~(1.15)]{AizBur99}) is sufficient for the (Euclidean) Hausdorff dimension 
	to be greater than 1. Unfortunately, we can not prove this condition for LQG geodesics in full generality  and 
	it is not clear to us whether this is even true. However, it turns out that the proof of the lower bound on 
	Hausdorff dimension involves some specific configurations of rectangles (or annuli) for which we can 
	establish a near-geometric decay. To this end let us introduce:
	\begin{defn}\label{def:goodcollection}
		A collection of disks $\{A_j\}_{j=1}^{n}$ is said to be $(\lambda, \nu)$-{\em balanced} for some $\lambda, \nu 
		> 1$ if the following three conditions are satisfied. 
		\begin{enumerate}[itemsep= 1.5ex, label = {\bf C\arabic*}]
			\item For any $j \ne j' $, $d(A_j, A_{j'}) \ge \max({\rm diam}(A_j), {\rm diam} ({\rm A}_{j'}))$ where $d(\cdot, 
			\cdot)$ denotes the Euclidean distance.
			\item There exists a sequence of non-negative integers $k_1 < k_2 < \cdots < k_m$ and $L_0 > 0$ such that each ${\rm diam}(A_j) \in [L_0 \lambda^{-k_i}/100, 100L_0 \lambda^{-k_i}] \eqqcolon I_i$ for some $i 
			\in [1, m]$.
			\item $\left | \{j: {\rm diam}(A_j) \in I_i \} \right | \in [\nu/100, 100\,\nu]$ for all $i \in [1, m]$.
		\end{enumerate}
	\end{defn}

\smallskip

As we will see below, such families of disks arise naturally when we subdivide a path into smaller crossings in a certain hierarchical fashion. We just need a few more definitions before we can resume our discussion on Theorem~\ref{thm:main}. If $A$ is the 
(open) Euclidean disk $B(z, r)$ centered at $z$ with radius $r$, we abbreviate the (Euclidean) annulus $A(z, r/2, r) \coloneqq B(z, r) \setminus B(z, r/2)$ as $A_{\circ}$. 
We say that a path $P$ {\em crosses} $A_{\circ}$ if it has a segment that is contained in $\overline A$, has 
its both endpoints on $\partial A$ and intersects the circle $\partial B(z, r/2)$.

We can now state the main ingredient of our proof of Theorem~\ref{thm:main}.
\begin{proposition}\label{prop:crossingprob}
Let $h$ be as in the statement of Theorem~\ref{thm:main}. Then for any $\lambda_0 > 1$ and $\xi > 0$, 
there exist $\rho = \rho(\lambda_0, \xi) \in (0, 1)$ and a positive absolute constant $C$ such that the 
following holds. For any collection of disks $\{A_j\}_{j = 1}^n$ that is $(\lambda, \nu)$-balanced for some 
$\lambda \ge \lambda_0$ and $\nu > 1$, one has
\begin{equation}\label{eq:crossingprob}
	\P[ \text{{\rm a $D_h$-geodesic crosses $A_{j, \circ}$ for all $j$}} \,] \le C \rho^{\frac{m \sqrt{\nu}}{\log \e \nu}}.
\end{equation}
\end{proposition}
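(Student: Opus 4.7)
The plan is to combine the axiomatic characterization of $D_h$ with the Markov property of the GFF to produce the claimed near-geometric decay. The target bound $\rho^{m \sqrt{\nu}/\log \e\nu}$ factorizes as a geometric-in-$m$ product across scales, with each scale contributing a factor $\rho^{\sqrt{\nu}/\log \e\nu}$; this structure dictates the argument, the main effort going into the within-scale estimate.

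The first step is a single-annulus estimate. For a disk $A = B(z, r)$, let $\mathcal{G}(A)$ be the local event that the $D_h$-distance from $\partial B(z, r/2)$ to $\partial B(z, r)$ measured inside $\overline{A}$ is larger than, say, half the $D_h$-length of the outer circle $\partial B(z,r)$; on $\mathcal{G}(A)^c$, no $D_h$-geodesic can cross $A_\circ$ because detouring along the outer boundary is cheaper. Using the scale and translation covariance of $D_h$ together with Weyl scaling and locality, I would show that, on a typical ``regular'' event for the outside field, the conditional probability $\P[\mathcal{G}(A) \mid h|_{\mathbb{C} \setminus A}]$ is at most $1 - \alpha_0$ for some $\alpha_0 = \alpha_0(\xi, \lambda_0) > 0$ uniform over the family of disks, provided $\lambda \ge \lambda_0$. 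The separation condition $\mathbf{C1}$ is precisely what makes the rescaling argument uniform in $j$.

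Next, at a fixed scale $i$, I would apply the Markov decomposition $h = h_{\mathrm{harm}} + \sum_j h_j^{\mathrm{zb}}$ on $\bigcup_j A_j$ (for $j$ with $\mathrm{diam}(A_j) \in I_i$), where $h_{\mathrm{harm}}$ is harmonic in each $A_j$ and, conditional on $h_{\mathrm{harm}}$, the $h_j^{\mathrm{zb}}$ are independent zero-boundary GFFs. Conditioning on $h_{\mathrm{harm}}$ renders the local events $\mathcal{G}(A_j)$ independent across $j$. On a ``good'' event that $h_{\mathrm{harm}}$ has bounded oscillation inside each $A_j$ (modulo an additive constant absorbed by Weyl scaling), the single-disk estimate applies conditionally and gives total probability at most $(1-\alpha_0)^{\#\{\text{good } j\}}$. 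The ``special regularity property'' of the GFF announced in the introduction should produce, via a Gaussian concentration inequality of Borell--TIS type, a bound showing that the number of ``good'' $j$'s is at least $c\sqrt{\nu}/\log \e\nu$ except on an event of probability at most $\e^{-c' \sqrt{\nu}/\log \e\nu}$. Combining with the conditional bound produces the per-scale factor $\rho^{\sqrt{\nu}/\log \e\nu}$.

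Finally, to iterate across scales, I would use the spatial Markov property of $h$ on the concentric annuli separating consecutive scales; conditions $\mathbf{C1}$ and $\lambda \ge \lambda_0$ give enough scale separation to make the decompositions effectively independent across the $m$ scales, so the per-scale bounds multiply and yield $\rho^{m\sqrt{\nu}/\log \e\nu}$. The main obstacle is the within-scale rate: extracting $\sqrt{\nu}/\log \e\nu$ rather than a cleaner $\nu$ reflects the need to quantify precisely how often the harmonic extension can simultaneously be atypical across many disks at the same scale, and to do so uniformly across all $m$ scales without accruing a per-scale polynomial prefactor that would destroy the near-geometric multi-scale product. I expect this is where the novel regularity result for the GFF must be deployed most carefully.
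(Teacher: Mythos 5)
Your overall strategy---decouple the crossing events via the spatial Markov property by writing $h = \widehat h_{z_j,r_j} + \widetilde h_{z_j,r_j}$ inside each disk, show a uniform per-disk bound for the zero-boundary part, and use a Gaussian concentration inequality to control the harmonic extensions---is exactly the skeleton of the paper's argument. However, the step where you iterate across the $m$ scales is a genuine gap, and it misattributes where the rate $\sqrt{\nu}/\log\e\nu$ comes from. You propose to ``use the spatial Markov property of $h$ on the concentric annuli separating consecutive scales''; but the $(\lambda,\nu)$-balanced condition imposes no nested or hierarchical geometry whatsoever. Condition~\textbf{C1} only forces well-separation between disks of comparable size; disks at different scales can be scattered arbitrarily across the plane (in fact, in the application to Theorem~\ref{thm:main}, each big rectangle $R_i$ gets sliced into $\sim\sqrt{\lambda}$ disks at the scale $L_{k_i}$, and the hierarchical containment that exists in the rectangles $R_i$ is discarded after extracting the well-separated family). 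There are therefore no ``separating annuli'' on which to condition, and the harmonic extensions $\widehat h_{z_j,r_j}$ at different scales are genuinely correlated with one another because they are all functionals of the same field $h$.

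The paper avoids this by conditioning on $h$ outside the union of \emph{all} $n$ disks at once (which is legitimate since they are pairwise disjoint by \textbf{C1}), rendering all the interior zero-boundary pieces $\widetilde h_{z_j,r_j}$ independent simultaneously, across all scales at once. The per-disk bound is then conditional and follows your plan (compare $D_{\widetilde h}$-around with $D_{\widetilde h}$-across, with a uniform constant by scale invariance plus absolute continuity with the local GFF). The entire technical load is then carried by a single Gaussian concentration estimate (Borell--TIS) for the sum $\sum_j \sup_{u,v} \big(\widehat h_{z_j,r_j}(u)-\widehat h_{z_j,r_j}(v)\big)$ over all $n$ disks, with mean $\lesssim n \lesssim m\nu$ and variance $\lesssim m\nu^{3/2}\log\e\nu$. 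The mysterious $\sqrt{\nu}/\log\e\nu$ in the exponent is $n^2/\mathrm{var}$ and it is the \emph{cross-scale} covariance terms $\cov(\Delta_{z_j,r_j},\Delta_{z_k,r_k})$ for $j\in S_i$, $k\in S_\ell$, $\ell>i$ that produce the $\nu^{3/2}\log\e\nu$ degradation---precisely the correlations you would like to wish away via scale separation. If one could genuinely decouple scales, the within-scale variance is only $O(\nu\log\nu)$ (same-scale disks are far apart relative to their radii, so the covariances $\lesssim r^2/d_{jk}^2$ decay), and one would get the much stronger rate $\rho^{m\nu/\log\nu}$; the fact that the paper only achieves $\rho^{m\sqrt\nu/\log\e\nu}$ is exactly the price paid for the cross-scale correlations. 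So the key input you need to supply is not a ``number of good $j$'s per scale'' estimate but the cross-scale covariance estimate $|\cov(\Delta_{z_j,r_j},\Delta_{z_k,r_k})|\lesssim r_j r_k/d_{j,k}^2$ (Lemma~\ref{lem:variance_estimate}) together with the careful double sum over $j\in S_i$ and $k\in\bigcup_{\ell\ge i}S_\ell$ that turns it into the variance bound.

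One minor point: your single-disk criterion compares the across-distance to ``half the $D_h$-length of the outer circle $\partial B(z,r)$''. A fixed smooth curve a.s.\ has infinite $D_h$-length, so this quantity is not usable; the comparison must be to $D_h(\text{around}\;A)$, the infimum of $D_h$-lengths over all separating paths in a sub-annulus, as in Lemma~\ref{lem:crossingonedisk}.
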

Theorem~\ref{thm:main} follows from Proposition~\ref{prop:crossingprob} by a similar argument 
as in \cite{AizBur99} with some adjustments along the way. For sake of completeness, we present 
the essential parts of the argument below. Let us start by recalling the notion of {\em straight runs} from 
\cite{AizBur99}. For a $\lambda > 1$ which we fix at the outset, a path $P$ in $\R^2$ is said to exhibit a straight run at scale $L$ if it traverses some rectangle of length $L$ and cross-sectional diameter 
$(9/\sqrt{\lambda})L$ in the ``length'' direction, joining the centers of the corresponding sides. Two straight 
runs are {\em nested} if one of the defining rectangles contains the other.

The straight runs of $P$ are $(\lambda, k_0)$-{\em sparse} if $P$ does not exhibit any nested collection of 
straight runs on a sequence of scales $L_{k_1} > \cdots > L_{k_m}$ with $L_k \coloneqq L_0 \lambda^{-k}$ and  $m \ge \frac12 \max\{k_m, k_0\} = \frac12 k_m$. We have the following deterministic result from \cite{AizBur99}.
\begin{theorem}\cite[Theorem~5.1]{AizBur99}\label{thm:hausdorff} 
If the straight runs of a given path $P$ are $(\lambda, k_0)$-sparse for some $k_0 > 0$, then the Hausdorff dimension of $P$ is at least $s$ with $s$ given by $\lambda^s = \sqrt{p(p+1)}$ and $p$ an 
integer strictly smaller than $\lambda$.	
\end{theorem}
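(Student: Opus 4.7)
The plan is to deduce the lower bound on $\dim_H P$ from a multiscale box-counting estimate via the mass-distribution principle. Fix a bounded region $K$ containing the portion of $P$ under consideration, and for each $k \ge 0$ let $\mathcal Q_k$ denote the tiling of $K$ by axis-aligned squares of side $L_k = L_0 \lambda^{-k}$. Write $N_k$ for the number of $Q \in \mathcal Q_k$ that meet $P$. The target estimate is $N_k \gtrsim \lambda^{sk}$ for all sufficiently large $k$; once this is in hand, distributing a Borel measure $\mu$ on $P$ whose mass decays at rate $\lambda^{-sk}$ on each active $L_k$-square will give $\mu(B(x,r)) \le C r^s$, and Frostman's lemma will complete the proof.

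The core step is a local dichotomy at each refinement from $\mathcal Q_{j-1}$ to $\mathcal Q_j$. Fix $Q \in \mathcal Q_{j-1}$ and let $\sigma$ be a connected segment of $P \cap Q$ whose diameter is comparable to $L_{j-1}$. Either $\sigma$ exhibits a straight run at scale $L_{j-1}$, i.e., it is contained in a rectangle of length $L_{j-1}$ and width $(9/\sqrt\lambda)L_{j-1}$ and joins the centres of the two short sides, or it does not. In the non-straight case, an elementary planar argument shows that $\sigma$ must intersect at least $p(p+1)$ of the $\lambda^2$ sub-squares of $\mathcal Q_j$ contained in $Q$: since $\sigma$ cannot be confined to any such thin strip, it must span at least $p+1$ sub-squares in one coordinate direction while also spreading over at least $p$ sub-squares in the orthogonal one, and the integer constraint $p < \lambda$ makes both counts compatible within $Q$. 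Each non-straight refinement thus multiplies the local sub-box count by at least $p(p+1)$, whereas each straight one contributes at least a factor of $1$.

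Iterating along the chain of nested squares $Q_{k_0} \supset Q_{k_0+1} \supset \cdots \supset Q_k$ enclosing any fixed point of $P$, the $(\lambda, k_0)$-sparseness hypothesis forbids a nested chain of straight runs of length $\ge k/2$, so at least $k/2$ of the refinements between scales $k_0$ and $k$ fall in the non-straight case. This yields
\[
N_k \;\ge\; \bigl(p(p+1)\bigr)^{k/2} \;=\; \bigl(\sqrt{p(p+1)}\,\bigr)^{k} \;=\; \lambda^{sk}.
\]
To upgrade this box-counting bound to Hausdorff dimension, one defines $\mu$ on the tree of active sub-squares by splitting mass equally among children at each refinement; the sub-box count above forces $\mu(Q) \le C \lambda^{-sk}$ for every active $Q \in \mathcal Q_k$, hence $\mu(B(x, L_k)) \le C' L_k^s$ for all $x \in P$, and Frostman's lemma gives $\dim_H P \ge s$.

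The main obstacle is the local geometric claim underlying the dichotomy: that the failure of a straight run within $Q$ at scale $L_{j-1}$ forces $\sigma$ to touch at least $p(p+1)$ sub-boxes of $\mathcal Q_j$. The numerical constants in the definition of a straight run, in particular the width factor $9/\sqrt\lambda$, are calibrated precisely so that this count goes through for every integer $p < \lambda$; the verification must handle pieces of $P$ that enter and leave $Q$ multiple times (by decomposing into individual crossings) as well as near-straight configurations that could otherwise evade the count. Once this combinatorial input is secured, the remaining multiscale bookkeeping and the Frostman step are routine.
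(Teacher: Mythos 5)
The paper does not prove this theorem; it is quoted verbatim from \cite[Theorem~5.1]{AizBur99}, and the relevant construction is only invoked again in the proof of Lemma~\ref{lem:length_lower_bnd}. So the right benchmark is the Aizenman--Burchard argument, which your proposal deviates from in a way that creates a genuine gap.

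The central geometric claim --- that whenever a segment $\sigma$ of diameter $\asymp L_{j-1}$ fails to exhibit a straight run it must meet at least $p(p+1)$ of the $\lambda^2$ sub-squares of side $L_j$ --- is false. Spanning $p+1$ columns and $p$ rows does not force the curve to visit $p(p+1)$ cells; an ``L''-shaped crossing that travels down one column and then across one row spans the required columns and rows while touching only $O(p)$ sub-squares. The ``compatible within $Q$'' step of your dichotomy has no content, and without it the estimate $N_k \gtrsim (p(p+1))^{k/2}$ is unsupported. The companion claim that a straight refinement ``contributes at least a factor of $1$'' is also too weak to drive a mass-distribution bound: if the mass split is trivial at a block of consecutive scales, the measure concentrates and $\mu(B(x,r)) \le C r^s$ fails at those intermediate $r$, so you cannot simply invoke Frostman with that branching.

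What Aizenman and Burchard actually do (and what the paper reproduces in the proof of Lemma~\ref{lem:length_lower_bnd}) is construct a nested family $\Gamma_0,\dots,\Gamma_k$ of \emph{well-separated curve segments}, not boxes: at each level the segments have diameter $\ge L_k$, are pairwise at distance $\ge \varepsilon L_k$, and each parent contains at least $p$ children, with at least $p+1$ children \emph{unless} it exhibits a straight run at that scale. The dichotomy factors are therefore $p$ versus $p+1$, not $1$ versus $p(p+1)$; the exponent $\sqrt{p(p+1)}$ arises only after invoking sparseness to ensure that at least half the scales along any chain contribute the larger factor, i.e.\ $\prod_j n_j \ge p^{k/2}(p+1)^{k/2}$. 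The pairwise-separation of segments at each scale is also what makes the Frostman step legitimate: a ball of radius $\varepsilon L_k/2$ meets at most one level-$k$ segment, so the mass bound $\mu(\eta)\le \beta^{-k}$ translates directly into $\mu(B(x,r))\le C r^s$. Your box-decomposition has no analogous separation and no correct per-scale branching estimate, so as written the argument does not establish the theorem; to repair it you would need to replace boxes by the well-separated segment tree and prove the $p$-versus-$(p+1)$ dichotomy for sub-segments, which is precisely the content of \cite[Lemmas~5.2--5.4]{AizBur99}.
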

We can now finish the
\begin{proof}[Proof of Theorem~\ref{thm:main}]
It suffices to show that there exist $p < \infty$ such that for every $\lambda > p$ and $K \subset \subset \R^2$, 
\begin{equation}\label{eq:haudorff_bnd}
\P [\,\text{straight runs of $D_h$-geodesics are $(\lambda, k_0)$-sparse in $K$}\,] \ge 1 - C_K 2^{-k_0}
\end{equation}
where $C_K$ depends only $K$ and $L_0 = 1$. Indeed, using Borel-Cantelli lemma we can conclude from 
\eqref{eq:haudorff_bnd} that there exists, almost surely, $k_0 = k_0(\omega) < \infty$ such that all the 
$D_h$-geodesics in $K$ are $(\lambda, k_0)$-sparse. Taking $\lambda$ close enough to $p$, we then get 
from Theorem~\ref{thm:hausdorff} that the Hausdorff dimension of any $D_h$-geodesic in $K$ is strictly greater than 1. Theorem~\ref{thm:main} now follows by letting $K \uparrow \R^2$.

\smallskip

Let us now return to the proof of \eqref{eq:haudorff_bnd}. To this end we first compute the probability that 
there is a nested sequence of straight 	runs at scales $L_{k_1}, \ldots, L_{k_m}$. If a path crosses a 
rectangle $R$ of length $L$ and width $(9/\sqrt{\lambda})L$ in the long direction, then it also crosses a 
rectangle $R'$ of width $(10  / \sqrt{\lambda})L$ and length $L/2$ centered at a line segment joining 
discretized points in $L'\Z^d$, provided that $L' \le L/\lambda$ is picked in a suitable way. Furthermore if $L 
= L_k$ and $R$ contains a smaller rectangle with length $L_{k'}$ and the same aspect ratio for some $k' > 
k$, then $R'$ can be chosen so that it contains the smaller rectangle as well. Therefore the number of 
possible locations of the $m$ nested rectangles obtained in this manner from the straight runs at scales 
$L_{k_1}, \ldots, L_{k_m}$ is bounded by
\begin{equation}\label{eq:noofloc}
	C_K \lambda^{4k_1}\lambda^{4(k_2 - k_1)} \cdots \lambda^{4(k_m - k_{m-1})} \le C_K \lambda^{4k_m}.
\end{equation}

\smallskip

Let us now fix a sequence $R_i, i = 1, \ldots, m$ of nested rectangles of length $L_{k_i}/2$ and width $(10 / 
\sqrt{\lambda})L_{k_i}$. In the sequel we will call a collection $\mathcal S$ of subsets of the plane as {\em 
	well-separated} if the distance between any $S \in \mathcal S$ and the rest is at least as large as the 
diameter of $S$ (recall the first condition in Definition~\ref{def:goodcollection}). Now split each of the rectangles $R_i$ to 
get  $\sqrt{\lambda}/40$ shorter rectangles of aspect ratio $2$. Since $R_{i+1}$ intersects at most two of 
the shorter rectangles obtained by subdividing $R_i$, the number of rectangles in a maximal well-separated collection is at 
least $m(\sqrt{\lambda}/ 80 - 2)$. Let us call these new rectangles $\{R_j'\}_{j = 1}^n$. Now observe that 
any path that crosses $R_j'$ also crosses $A_{j, \circ}$ (cf.~\eqref{eq:crossingprob}) where $A_j$ is the disk centered at the line joining the midpoints of the shorter sides of $R_j'$. Hence the 
probability of a geodesic crossing all the $R_i$'s is bounded above by the probability of crossing $A_{j, 
	\circ}$ for all $j \in [1, n]$. It also follows from the definition of $R_j'$ and $A_j$ that the family of disks 
$\{A_j\}_{j = 1}^n$ is $(\sqrt{\lambda},\sqrt{\lambda} )$-balanced for any $\lambda$ satisfying 
$\sqrt{\lambda} > 800$. Therefore using Proposition~\ref{prop:crossingprob} we get for any such 
$\lambda$ 
\begin{equation*}
	\P[\text{$R_1, \ldots, R_m$ are crossed by a $D_h$-geodesic}] \le C \rho^{\frac{m \lambda^{1/4}}{\log \lambda}}
\end{equation*}
where $\rho = \rho(\xi) \in (0, 1)$. Combined with \eqref{eq:noofloc} this yields that for all $\lambda$ larger 
than some fixed number and $k_m$ satisfying $m \ge k_m/2$,
\begin{equation*}
	\P\left[\begin{array}{c} \text{there exists a nested sequence of staright runs of }\\ \text{a $D_h$-geodesic at scales $L_{k_1}, \ldots, L_{k_m}$ inside $K$}\end{array}\right] \leq C_K \e^ {(C\log \lambda + \log \rho 
		\lambda^{1/4}/\log \lambda)m }
\end{equation*}
where $C$ is an absolute constant. Now choosing $p > (800)^2$ large enough so that $$C\log \lambda + \log \rho \lambda^{1/4}/\log \lambda < -4,$$
for all $\lambda > p$ and summing over all the sequences $k_1 < \cdots < k_m$ satisfying $m \le k_m \le 2m$, we get
\begin{equation*}
	\P\left[\begin{array}{c} \text{there exists a nested sequence of staright runs of a $D_h$-geodesic}\\ \text{ at (log-)scales ${k_1}, \ldots, k_m$ inside $K$ with $m \le k_m \le 2m$}\end{array}\right] \leq C_K 4^{m}\e^{-4m} \le C_K 4^{-m}.
\end{equation*}
Finally, summing over $m \ge \frac {k_0}{2}$ yields \eqref{eq:haudorff_bnd}.
\end{proof}

\subsection{Lower bound on the length of LFPP geodesics} \label{subsec:LFPP_intro}
Let us first give a definition of the {\em Liouville first passage percolation} (LFPP) that is standard in recent 
literature. Soon we will give another definition which is equivalent to the former one in the limiting sense (i.e., 
they both converge to the LQG metric) and is more convenient to work with for our purpose. In the remainder 
of this section, $h$ is a whole-plane GFF with the additive constant chosen so that its average over the unit 
circle is 0. The starting point in all definitions of LFPP is a family of continuous functions which approximate $h$. For $\varepsilon > 0$, consider a {\em mollified} version of $h$ by
\begin{equation*}
h_{\varepsilon}^{\star}(z) \coloneqq  (h \star p_{\varepsilon^2/2})(z) = \int_{\mathbb C} h(w) 
\,p_{\varepsilon^2/2}(z - w) \, dw, \quad \forall z \in \mathbb C,
\end{equation*}
where $p_s(z) \coloneqq \frac{1}{2\pi s}\exp\big( - \tfrac{|z|^2}{2s}\big)$ is the heat kernel and the 
integration is in the sense of distributional pairing.

Now we define the {\em Liouville first passage percolation} (LFPP) with parameter $\xi$ as the family of 
random metrics $\{D_h^{\varepsilon}\}_{\varepsilon > 0}$ defined by
\begin{equation}\label{eq:D_h_ep}
D_{h}^{\varepsilon} (z, w) \coloneqq \inf_{P: z \to w} \int_{0}^1 \e^{\xi h^{\star}_{\varepsilon}(P(t))} |P'(t)| dt, 
\quad \forall z, w \in \mathbb C
\end{equation}
where the infimum is over all piecewise $C^1$-paths $P: [0, 1] \mapsto \mathbb C$ from $z$ to $w$. To get 
a non-trivial limit of the metrics $D_h^{\varepsilon}$ in some suitable topology, one needs to re-normalize 
them. The standard, although somewhat arbitrary, choice for the normalizing factor is
\begin{equation}\label{eq:median}
\mathfrak{a}_{\varepsilon} \coloneqq \text{median of }\inf\left\{ \int_0^1 \e^{\xi h_{\varepsilon}^{\star}(P(t))} |P'(t)| dt:    P \mbox{ is a left-right crossing of }[0, 1]^2\right\}.
\end{equation}
The tightness of the rescaled metrics $\{\mathfrak a_{\varepsilon}^{-1}D_h^{\varepsilon}\}_{\varepsilon > 0}$ 
was established in \cite{DingDubDunFalc20} and \cite{ding2020tightness} for subcritical and general $\xi$ respectively albeit in different topologies. See \cite{ding2021introduction} for more details.

Since the function $h^{\star}_{\varepsilon}$ is continuous, the geodesics associated with  
$D_h^{\varepsilon}$ are (locally) rectifiable and hence their 
Hausdorff dimension is 1. Therefore we can instead look at the (Euclidean) lengths of geodesics and ask 
whether they diverge as $\varepsilon \to 0$ with high probability. Ding and Zhang \cite{DZ19} proved a 
power law divergence for the length when the LFPP distance is defined using  the discrete Gaussian free 
field (DGFF) under the assumption that, with high probability, maximum LFPP distance (appropriately scaled) between any points in 
a compact set is at most $\varepsilon^{c}$ for some $c > 0$. As already mentioned in Section~\ref{sec:background}, this 
condition is currently known to hold only for very small values of $\xi$ \cite{DingGos19} and $\xi \in 
(0.266\ldots, 1/\sqrt{2})$ \cite{GwynnePfeffer19}. Using similar ideas as involved in the proof of 
Theorem~\ref{thm:main},  we can deduce the power law lower bound on the length of LFPP geodesics for 
{\em all} values of $\xi$. We state this result for a slightly different choice of mollification as described below.

\vspace{0.1cm}

In several situations like in ours, it turns out to be more convenient to work with mollifications that depend 
{\em locally} on $h$ unlike $h_{\varepsilon}^{\star}(\cdot)$ above. 
To this end, let us consider
\begin{equation}\label{def:widehat_h_ep}
\widehat h_{\varepsilon}^\star(z) \coloneqq \int_{\mathbb C} \psi (\varepsilon^{-1/2}(z - w))\, 
h(w)\,p_{\varepsilon^2/2}(z - w)\, dw, \quad \forall z \in \mathbb C,
\end{equation}
where $\psi: \mathbb C \mapsto [0, 1]$ is a deterministic, smooth, radially symmetric bump function 
supported in $\overline B(0, 1)$ that is identically equal to 1 on $B(1/2, 0)$. We can define a LFPP metric 
$\widehat D_h^{\varepsilon}$ similarly as $D_h^{\varepsilon}$ with $\widehat h_{\varepsilon}^{\star}$ 
playing the role of $h_{\varepsilon}^{\star}$ (recall \eqref{eq:D_h_ep}). It was proved in \cite[Lemma~2.1]{DubFalcGwynPfeSun20} that a.s. $\lim_{\varepsilon \to 0} \tfrac{\widehat 
D_h^{\varepsilon}(z, w; U)}{D_h^{\varepsilon}(z, w; U)} = 1$ uniformly over all $z, w \in U$ ($z 
\ne w$) for each bounded open set $U \subset \mathbb C$ where $d(\cdot, \cdot ; U)$ is the internal metric 
of $d$ on $U$ (see Definition~\ref{def:metricspace} below). Consequently, the families of metrics $\{ \mathfrak a_{\varepsilon}^{-1} D_{h}^{\varepsilon}\}_{\varepsilon > 0}$ and $\{ \mathfrak a_{\varepsilon}^{-1} 
\widehat D_{h}^{\varepsilon}\}_{\varepsilon > 0}$ have the same (weak) subsequential limits 
\cite[Lemma~2.15]{pfeffer2021weak}. We now state our result on the (Euclidean) lengths of $\widehat 
D_h^{\varepsilon}$-geodesics. Below and in rest of the article, two points $z, w \in \mathbb C$ are said to be 
$\kappa$-{\em separated} if their Euclidean distance is at least $\kappa$.
\begin{theorem}\label{thm:LFPP_length}
For each $\xi \in (0, \infty)$, there exists $\alpha = \alpha(\xi) > 0$ such that for every $\kappa \in (0, 1)$ and 
$K \subset \R^2$ compact,
\begin{equation*}
\lim_{\varepsilon \to 0} \P [ \text{{\rm the length of any $\widehat D_h^{\varepsilon}$ geodesic 
		connecting two any $\kappa$-separated points in $K$}} \ge \varepsilon^{-\alpha} ] = 1.
\end{equation*}
\end{theorem}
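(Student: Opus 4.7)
The plan is to mimic the strategy of Theorem~\ref{thm:main}, adapted to the $\varepsilon$-regularized LFPP setting and aimed at a power-law length lower bound instead of a dimension lower bound. First, I would prove an LFPP analog of Proposition~\ref{prop:crossingprob}: for each $\lambda_0 > 1$ and $\xi > 0$ there exist $\rho = \rho(\lambda_0, \xi) \in (0, 1)$ and $C > 0$ such that for all sufficiently small $\varepsilon > 0$ and every $(\lambda, \nu)$-balanced collection $\{A_j\}_{j=1}^n$ with $\lambda \ge \lambda_0$ and $\min_j {\rm diam}(A_j) \ge \varepsilon^\theta$ (for some fixed $\theta \in (0,1)$),
\begin{equation*}
\P\bigl[\text{a } \widehat D_h^\varepsilon\text{-geodesic crosses } A_{j,\circ} \text{ for all } j\bigr] \le C \rho^{m\sqrt{\nu}/\log(\e\nu)}.
\end{equation*}
Working with the local mollification $\widehat h_\varepsilon^\star$ rather than $h_\varepsilon^\star$ is important here, since $\widehat h_\varepsilon^\star(z)$ depends on $h$ only through its restriction to $B(z, \sqrt\varepsilon)$, which restores the form of spatial locality used in the inductive proof of Proposition~\ref{prop:crossingprob} on scales above $\sqrt\varepsilon$.

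Second, I would re-run the Aizenman-Burchard argument from the proof of Theorem~\ref{thm:main} at all scales between a macroscopic scale and $\varepsilon^\theta$, using Step 1 in place of Proposition~\ref{prop:crossingprob}. The same computation as in that proof yields, with $L_0 = 1$, that with probability at least $1 - C_K 2^{-k_0}$ the straight runs of every $\widehat D_h^\varepsilon$-geodesic in $K$ are $(\lambda, k_0)$-sparse above scale $\varepsilon^\theta$, for all $k_0 \le c\log(1/\varepsilon)$ and $\lambda$ large. A quantitative reading of the Frostman-type measure constructed in the proof of Theorem~\ref{thm:hausdorff} translates this sparseness into a lower bound of order $\varepsilon^{-\theta s}$ on the $\varepsilon^\theta$-covering number of every such geodesic, for some $s = s(\lambda) > 1$. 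Since a rectifiable curve of length $\ell$ has $r$-covering number at most $\ell/r + 1$, taking $r = \varepsilon^\theta$ gives the Euclidean length lower bound $\ell \ge c\,\varepsilon^{-\theta(s-1)}$. Setting $\alpha := \theta(s-1)/2 > 0$ and performing a standard net argument over $\kappa$-separated pairs in $K$ completes the proof.

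The main obstacle is Step 1, namely proving the crossing probability estimate with constants uniform in $\varepsilon$ down to scales polynomial in $\varepsilon$. The proof of Proposition~\ref{prop:crossingprob} relies on the exact Weyl scaling, locality and axiomatic characterization of the LQG metric; for $\widehat D_h^\varepsilon$ these hold only approximately, with errors controlled by the smoothing scale $\sqrt\varepsilon$. Tracking how these errors accumulate through the induction across scales will determine the admissible range of $\theta$, and hence of $\alpha$. By contrast, the remaining steps are essentially a quantitative reshuffling of arguments already present in the proof of Theorem~\ref{thm:main} and in \cite{AizBur99}.
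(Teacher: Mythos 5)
Your proposal matches the paper's proof structurally in its two main ingredients: an LFPP analogue of Proposition~\ref{prop:crossingprob} valid for disks whose radii stay above a power of $\varepsilon$ (the paper's Proposition~\ref{prop:crossingprobbis} uses the range $[32\varepsilon^{1/2},1]$), and a ``finite-scale'' version of the Aizenman--Burchard sparsity-to-roughness lemma (the paper's Lemma~\ref{lem:length_lower_bnd}, which counts segments in $\Gamma_{k_{\rm max}}$ directly rather than passing through covering numbers, but this is the same estimate). You also correctly single out the local mollification $\widehat h^\star_\varepsilon$ as essential for the Markov/locality argument.

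One point to correct in how you frame the obstacle: the proof of Proposition~\ref{prop:crossingprob} is not an induction across scales, so there is no accumulation of errors to track. It is a one-shot decoupling via the Markov decomposition $h = \widetilde h_{z,r} + \widehat h_{z,r}$ followed by a concentration estimate for $|\mathcal S|$. Properties~\ref{property1} and~\ref{property2} transfer to $\widehat D_h^\varepsilon$ essentially verbatim once the disks have radius $\ge 32\varepsilon^{1/2}$ (this is what pins down $\theta = 1/2$ rather than a free $\theta \in (0,1)$: the mollification scale is $\varepsilon^{1/2}$ so locality holds only above that scale). The genuinely new difficulty is Property~\ref{property3}: the LFPP identity $D^\varepsilon_h(rz,rw)=rD^{\varepsilon/r}_{h(r\cdot)}(z,w)$ is exact, but it changes the regularization parameter, so $\P[\widetilde F_{z,r}] \ne \P[\widetilde F_{0,1}]$ and one must instead invoke the uniform-in-$\delta$ tightness of $\mathfrak a_\delta^{-1}D_h^\delta(\text{across/around})$ from \cite{ding2020tightness}, together with the a.s. uniform comparison between $D_h^\delta$ and $\widehat D_h^\delta$ from \cite{DubFalcGwynPfeSun20} and a relative-entropy bound in place of straight absolute continuity. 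Filling in your Step~1 along those lines reproduces the paper's argument.
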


\smallskip

\begin{remark}[Relationship between Theorem~\ref{thm:main} and Theorem~\ref{thm:LFPP_length}]\label{remark:relation}
The results given by Theorem~\ref{thm:main} and Theorem~\ref{thm:LFPP_length} are independent {a priori} 
since neither implies the other {\em even if} one assumes that the LFPP geodesics converge in Hausdorff 
distance to the LQG geodesics. Although the ideas behind their proofs are similar, the arguments involved 
vary significantly in their details.
\end{remark}

\smallskip

\begin{remark}[Quantitative lower bounds on the dimension and the length exponent]\label{remark:quantitaive}
It is possible to extract quantitative lower bounds on the (Euclidean) Hausdorff dimension and the length 
exponent for LQG and LFPP geodesics respectively from our proofs. In fact, it can be shown that the 
lower bound on the Hausdorff dimension is asymptotic to $1 + \e^{-C \xi^{-2}}$ as $\xi \to 0$ along with an 
analogous lower bound for the length exponent $\alpha$. However, we do not expect these bounds to be 
optimal.
\end{remark}

\bigskip

\smallskip

We now briefly describe the organization of this article. In Section~\ref{sec:lqgdef} we review the axiomatic 
characterization as well as some basic properties of the LQG metric which we will need in our proof. 
Section~\ref{sec:crossprob} is devoted to proving Proposition~\ref{prop:crossingprob} 
which comprises several intermediate lemmas including a certain regularity estimate for the harmonic 
extensions of GFF inside a collection of disks that is $(\lambda, \nu)$-balanced (Lemma~\ref{lem:max_dev}). 
Finally, in Section~\ref{sec:LFPP} we give the proof of Theorem~\ref{thm:LFPP_length}.

\smallskip

Our convention regarding constants is the following. Throughout, $c, c', C, C', \ldots$ denote positive 
constants that may change from place to place. Numbered constants are defined the first time they appear and remain fixed thereafter. 
Unless mentioned otherwise, all the constants are assumed to be absolute. Their dependence on other 
parameters, if any, will always be made explicit.

\bigskip

\noindent\textbf{Acknowledgements.} S.G.’s research was supported by the SERB grant SRG/2021/000032 
and in part by a grant from the Infosys Foundation as a member of the Infosys-Chandrasekharan virtual 
center for Random Geometry. We are grateful to Jian Ding for introducing the problem to us as well as many 
helpful discussions. We especially thank Ewain Gwynne for giving many valuable comments on an earlier draft of the paper.

%
%
%

\section{Definition and some properties of the LQG metric}\label{sec:lqgdef}
In this section, we review the definition and some basic properties of the $\xi$-LQG metric. Let us start 
by recalling some basic notions from metric geometry.
\begin{defn}\label{def:metricspace}
Let $(X, d)$ be a metric space, with $d$ allowed to take on infinite values. 

\begin{itemize}[itemsep = 1.5ex]
	\item For a path $P: [a, b] \mapsto X$, the $d$-$length$ of $P$ is defined by
	\begin{equation*}
		{\rm len}(P; d) \coloneqq 	\sup_T\sum_{i = 1}^{\# T} d(P(t_i), P(t_{i-1}))
	\end{equation*}
	where the supremum is over all partitions $T : a = t_0 < \cdots < t_{\#T} = b$ of $[a, b]$. Note that the $d$-length of a path may be infinite.
	
	\item We say that $(X, d)$ is a length space if for each $x, y \in X$ and each $\varepsilon > 0$, there exists a 
	path of $d$-length at most $d(x, y) + \varepsilon$ from $x$ to $y$. If $d(x, y) < \infty$ a path from $x$ to 
	$y$ of $d$-length exactly $d(x, y)$ is called a {\em geodesic}.
	
	\item For $Y \subset X$, the {\em internal metric of $d$ on $Y$} is defined by 
	\begin{equation*}
		d(x, y ; Y) \coloneqq \inf_{P \subset Y} {\rm len}(P; d), \forall x, y \in Y	
	\end{equation*}
	where the infimum is over all paths $P$ in $Y$ from $x$ to $y$. Note that $d(\cdot, \cdot; Y)$ is a metric on 
	$Y$, except that it is allowed to take infinite values.
	
	\item If $X \subset \mathbb C$ we say that $d$ is a {\em lower semicontinuous metric} if the function $(x, y) \mapsto d(x, y)$ is lower semicontinuous w.r.t. the Euclidean topology. We equip the set of lower 
	semicontinuous metrics on $X$ with the following topology (the so-called {\em Beer 
		topology} \cite{Beer1982}) on lower semicontinuous functions on $X \times  X$ and the associated Borel 
	$\sigma$-algebra. A sequence of functions $\{f_n\}_{n \in \N}$ converges in this topology to another function $f$ if and only if 
	\begin{enumerate}
		\item[(i)] Whenever $(z_n, w_n) \in X \times X$ with $(z_n, w_n) \to (z, w)$, we have $f(z,w) \le \displaystyle{\liminf_{n \to \infty} f_n(z_n, w_n)}$. 
		
		\item[(ii)] For each $(z, w) \in X \times X$, there exists a sequence $(z_n, w_n) \to (z, w)$ such that $f_n(z_n, w_n) \to f(z, w)$.	
	\end{enumerate} 
\end{itemize}
\end{defn}

\vspace{0.2cm}

We would like to emphasize at this point for the sake of clarity that all the paths we consider in the sequel 
are {\em Euclidean paths}, i.e., paths in the Euclidean topology.

\medskip

We now define the LQG metric with paramater $\xi > 0$. The following axiomatic characterization of the 
LQG metric is taken from \cite{ding2021uniqueness} which covers all the phases of LQG. Also see the earlier 
works \cite{millerQiangeodesics20, GwynneMiller2021} for closely related formulations. The definition 
involves an additional parameter $Q	 = Q(\xi) > 0$ whose exact functional dependence on $\xi$ is currently 
unknown. We refer the reader to \cite[Section~1.2]{ding2021uniqueness} and 
\cite[Section~2.3.1]{ding2021introduction} for a detailed discussion on the relationship of this parameter 
with the coupling constant $\gamma$ and the matter central charge ${\mb c}_{{\rm M}}$.
\begin{defn}\label{definition:LQGmetric} Let $\mathcal D'$ be the space of distributions (generalized functions) on $\mathbb C$ equipped with the usual weak topology. For $\xi > 0$, an LQG metric with 
parameter $\xi$ is a measurable function $h \mapsto D_h$ from  to the space of lower semicontinuous 
metrics on $\mathbb C$ with the following properties\footnote{The definition of $D$ on any zero measure subset of $\mathcal D'$ w.r.t. the law of any whole-plane GFF plus a continuous function is 
	inconsequential to us.}. Let $h$ be a {\em GFF plus a continuous function 
	on $\mathbb C$}: i.e., $h$ is a random distribution on $\mathbb C$ which can be coupled with a random 
continuous function $f$ in such a way that $h - f$ has the law of the whole-plane GFF. Then the associated metric $D_h = D_h^{(\xi)}$ satisfies the following axioms.

\smallskip

\begin{enumerate}[itemsep= 1.5ex, label = {\Roman*}]
	\item  \label{axiomlen}{\bf Length space.} Almost surely, $(\mathbb C, D_h)$ is a length space.
	
	\item \label{axiomloc}{\bf Locality.} Let $U \subset \mathbb C$ be a deterministic open set. The 
	$D_h$-internal metric $D_h(\cdot, \cdot ; U)$ is a.s. given by a measurable function of $h\vert_{U}$ (see, e.g., 
	\cite[Section~2.2]{ding2021uniqueness} for a precise meaning of this).
	
	\item \label{axiomweyl}{\bf Weyl scaling.} For a continuous function $f: \mathbb C \mapsto \R$, define 
	\begin{equation}\label{eq:weyl_scaling}
		(\e^{\xi f} \cdot D_h)(z, w) \coloneqq \inf_{P: z \to w} \int_{0}^{{\rm len}(P; D_h)} \e^{\xi f(P(t))} dt, \quad \forall z, w \in \mathbb C,
	\end{equation}
	where the infimum is over all $D_h$-rectifiable paths from $z$ to $w$ in $\mathbb C$ parametrized by 
	$D_h$-length (we use the convention that $\inf \emptyset = \infty$). Then a.s. $\e^{\xi f} \cdot D_h = D_{h + 
		f}$ for every continuous function $f: \mathbb C \to \R$.
	
	\item \label{axiomaffine}{\bf Affine coordinate change.} There is a specific choice of $Q = Q(\xi) > 0$ such that 
	for each fixed deterministic $r > 0$ and $z \in \mathbb C$, a.s. 
	\begin{equation}\label{eq:coordinate_change}
		D_h(ru +z, rv +z) = D_{h(r\cdot + z) + Q\log r}(u, v), \quad \forall u, v \in \mathbb C.
	\end{equation}
	
	\item \label{axiomfinite}{\bf Finiteness.} Let $U \subset \mathbb C$ be a deterministic, open, connected set and let $K_1, K_2 \subset U$ be disjoint, deterministic, compact, connected sets which are not singletons. 
	Almost surely, $D_h(K_1, K_2; U) < \infty$.
\end{enumerate}
\end{defn}

The following theorem \cite{ding2021uniqueness} (see also \cite{GwynneMiller2021}) asserts that the LQG 
metric as defined in Definition~\ref{definition:LQGmetric} exists and is unique.
\begin{theorem}\label{thm:lqgexistunique}
For each $\xi > 0$ there exists an LQG metric $D$ with parameter $\xi$ satisfying the axioms of 
Definition~\ref{definition:LQGmetric}. This metric is unique in the following sense. If $D$ and $\widetilde D$ 
are two LQG metrics with parameter $\xi$, then there is a deterministic constant $C > 0$ such that a.s. 
$\widetilde D_h = C D_h$ whenever $h$ is a whole-plane GFF plus a continuous function.
\end{theorem}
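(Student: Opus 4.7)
The plan is to follow the two-stage program developed in the series of works cited just before the theorem: existence via subsequential limits of LFPP, followed by axiomatic uniqueness. For \emph{existence}, I would start with the rescaled LFPP metrics $\{\mathfrak{a}_\varepsilon^{-1} D_h^{\varepsilon}\}_{\varepsilon > 0}$ defined in \eqref{eq:D_h_ep}--\eqref{eq:median}. The first step is to establish tightness of this family in the Beer topology on lower semicontinuous metrics on $\mathbb C$ (jointly with various auxiliary observables such as distances between prescribed compact connected sets). Tightness was proved in \cite{DingDubDunFalc20} for subcritical $\xi$ and in \cite{ding2020tightness} for all $\xi > 0$, via a multi-scale argument based on concentration estimates for GFF circle averages and a percolation-type covering of a near-optimal path. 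Any subsequential limit in distribution gives a candidate metric $D_h$.

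Next, following \cite{DubFalcGwynPfeSun20}, I would verify that every such subsequential limit is in fact a measurable function of $h$ satisfying Axioms~\ref{axiomlen}--\ref{axiomfinite}. The length-space property and finiteness descend from the approximants once one shows that $D_h^\varepsilon$-lengths of near-optimal paths converge to the limiting metric and do not blow up across disjoint compact connected sets. The Weyl scaling axiom descends from the exact identity $D_{h+f}^\varepsilon = \e^{\xi f} \cdot D_h^\varepsilon$ valid for continuous $f$ at the discrete level. Locality is the most delicate of the transfer steps because the mollification $h \mapsto h_\varepsilon^\star$ is itself non-local; here one compares $D_h^\varepsilon$ with the localized mollification $\widehat D_h^\varepsilon$ of \eqref{def:widehat_h_ep} (using the uniform ratio convergence quoted in the introduction) and takes limits. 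For the affine coordinate change one matches the translation/scaling invariance of the whole-plane GFF with the behavior of $\mathfrak{a}_\varepsilon$; the constant $Q(\xi)$ is forced by the compatibility between Weyl scaling (Axiom~\ref{axiomweyl}) and Euclidean dilation.

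For \emph{uniqueness}, suppose $D$ and $\widetilde D$ both satisfy the axioms. Following \cite{GwynneMillerconcon2021,GwynneMiller2021,ding2021uniqueness}, I would compare them through the a.s.\ essential infimum $c_*$ and essential supremum $c^*$ of the bi-Lipschitz ratio $\widetilde D_h(z,w)/D_h(z,w)$ over $z,w \in \mathbb C$. Using Locality together with the tail-triviality of the whole-plane GFF one first shows $c_*$ and $c^*$ are deterministic, and using the affine coordinate change one shows they are independent of scale. The heart of the proof is a quantitative improvement step: assuming $c_* < c^*$, one exploits the near-independence of the GFF across disjoint annuli, combined with a confluence-of-geodesics statement, to construct with positive probability a path along which $\widetilde D_h / D_h$ is strictly less than $c^*$ on a macroscopic fraction of its length, and then upgrades this via Weyl scaling (by adding a small smooth bump to $h$) to contradict the definition of $c^*$. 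This forces $c_* = c^* =: C$, giving the desired identity.

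The principal obstacle is, without question, the uniqueness half, and within it the \emph{confluence of geodesics} estimate: any two $D_h$-geodesics starting from nearby pairs of points in a fixed compact set must coalesce after a very short initial segment. This is the hardest consequence of the axiomatic framework and is what ultimately powers the ratio-improvement argument; verifying it requires careful quantitative control on the overlap of independent LQG balls and subtle use of Axioms~\ref{axiomweyl}--\ref{axiomaffine}. Once this is in place the rest of the uniqueness argument, and in turn the promotion of subsequential existence to genuine existence, follow routinely.
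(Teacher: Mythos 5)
The paper does not give a proof of Theorem~\ref{thm:lqgexistunique}: it records the statement as an external result, pointing to \cite{ding2021uniqueness} (and \cite{GwynneMiller2021} for the subcritical case), and then simply \emph{uses} the metric's existence and uniqueness as input for the rest of the article. So there is no in-paper argument to compare against; the relevant question is whether your blind sketch accurately summarizes the program carried out in those references, and it largely does. You correctly identify the two-stage architecture --- tightness of the rescaled LFPP family via \cite{DingDubDunFalc20,ding2020tightness}, verification that all subsequential limits are ``weak LQG metrics'' satisfying Axioms~\ref{axiomlen}--\ref{axiomfinite} via \cite{DubFalcGwynPfeSun20,pfeffer2021weak}, and then the axiomatic uniqueness of \cite{GwynneMiller2021,ding2021uniqueness} (which simultaneously collapses the subsequential limits into a single limit) --- and you correctly flag confluence of geodesics as the crux of the uniqueness half. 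A small caution on accuracy of detail: your paraphrase of confluence (``any two geodesics from nearby pairs of points in a fixed compact set coalesce after a short initial segment'') is stated a bit too strongly; the result actually used is a coalescence statement for geodesics emanating from a single fixed center toward a circle, established through a zero--one argument, and the improvement step also needs an ``independence across annuli'' input to localize events. These are refinements of, not departures from, what you wrote, and the overall structure you describe is the right one; but if this were a genuine proof, the confluence and ratio-improvement lemmas would each require substantial work well beyond the sketch.
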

In view of Theorem~\ref{thm:lqgexistunique}, we can refer to the unique metric satisfying 
Definition~\ref{definition:LQGmetric} as the LQG metric (with parameter $\xi$). To be precise, the metric is 
unique only up to a global deterministic multiplicative constant. When referring to the LQG metric, we fix the 
constant in some arbitrary way. For example, we could require that the median distance between the left and 
right sides of $[0, 1]^2$ is $1$ when $h$ is a whole-plane GFF normalized so that its average over the unit 
circle is zero. It is clear that the geodesics underlying the metric, whenever they exist, remain invariant with 
respect to the choice of this constant.

\smallskip

Please note that the metric $D_h$ given by Theorem~\ref{thm:lqgexistunique} implicitly depends on our 
particular choice for the normalization of $h$. Indeed, it follows from the Weyl scaling 
(Axiom~\ref{axiomweyl}) that the metric corresponding to any other choice of normalization (say, we set the 
average of $h$ to be 0 on a different circle) is related to $D_h$ by a random positive prefactor.

\smallskip

The LQG metric associated with variants of GFF on other domains (like the zero-boundary GFF on a proper subdomain of $\mathbb C$) can be constructed from $D_h$ via restriction and / or local absolute continuity 
in view of locality (Axiom~\ref{axiomloc}) and the Weyl scaling; see \cite[Remark~1.5]{GwynneMiller2021}.

\smallskip

Of special interest are two types of $D_h$-distances which we now introduce:
\begin{defn}\label{def:annulus_dist}
For an annulus $A \subset \mathbb C$, we define $D_h(\text{across $A$})$ to be the $D_h$-distance 
between the inner and outer boundaries of $A$. We define $D_h(\text{around $A$})$ to be 
the infimum of the $D_h$-lengths of paths in $A$ which separate the inner and outer boundaries of $A$.
\end{defn}
These two types of (random) distances and the events {\em comparing} them play a crucial role in the 
study of LQG metric (see~\cite{ding2021introduction} and the references therein) as they are going to do in 
our work as well. Notice that these distances are determined by the internal metric of $D_h$ on $A$. It is a.s. the case that $D_h(\text{across $A$})$ and $D_h(\text{around $A$})$ are finite and positive 
(see~\cite{pfeffer2021weak} for more explicit tail bounds and also the discussion following 
\cite[Lemma~2.6]{ding2021uniqueness}). 

\smallskip

Finally, we come to the question of existence of geodesics. In the supercritical case $\xi > \xi_{{\rm crit}}$, 
the limiting metric in Theorem~\ref{thm:lqgexistunique} does not induce the Euclidean topology on $\mathbb C$. Rather, there exists an uncountable, Euclidean-dense set of {singular points} $z \in \mathbb 
C$ such that $D_h(z, w) = \infty$ for all $w \in \mathbb C \setminus \{z\}$ \cite[(1.8)]{ding2021uniqueness}. 
However, for each fixed $z \in \mathbb C$, a.s. $z$ is a non-singular point and hence the set of singular 
points has zero Lebesgue measure \cite{ding2021uniqueness}. On the subspace of non-singular points, we have the following result from \cite[Proposition~1.12]{pfeffer2021weak}.
\begin{prop}[\cite{pfeffer2021weak}]\label{prop:geodesic_existence}
Almost surely, the metric $D_h$ is complete and finite-valued on $\mathbb C \setminus \{\text{singular points}\}$. Moreover, every pair of points in $\mathbb C \setminus \{\text{singular points}\}$ can be joined by a $D_h$-geodesic 
\end{prop}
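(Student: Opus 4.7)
The plan is to invoke a Hopf--Rinow-type theorem for length spaces: in a complete, locally compact length space, any two points at finite distance can be joined by a geodesic. Axiom~\ref{axiomlen} supplies the length-space property, so the proof reduces to establishing (a) finiteness of $D_h$-distance between any two non-singular points, (b) local compactness of $(\mathbb C \setminus S, D_h)$ where $S$ denotes the singular set, and (c) $D_h$-completeness of the same space.

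For (a), I would argue by chaining. Given two non-singular points $z, w$, the definition of non-singularity together with lower semicontinuity produces arbitrarily small Euclidean disks around $z$ and $w$ whose bounding circles are at finite $D_h$-distance from the respective centers. Any two such circles sit as disjoint compact connected non-singleton subsets of a suitable bounded connected open set, so Axiom~\ref{axiomfinite} yields a finite $D_h$-path between them, and concatenation produces a finite $D_h$-length path from $z$ to $w$.

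For (b) and (c), the central task is to show that small closed $D_h$-balls around a non-singular point $z$ are $D_h$-compact. Euclidean-boundedness of such balls follows from Axiom~\ref{axiomaffine} together with tail estimates on $D_h(\text{around } A)$ for Euclidean annuli $A$ around $z$ at a hierarchy of scales, which imply that the $D_h$-distance from $z$ to the complement of any large Euclidean ball diverges almost surely. Lower semicontinuity from Definition~\ref{def:metricspace} makes the closed $D_h$-ball sequentially closed under Euclidean convergence, so the Euclidean closure of the ball is $D_h$-compact once one knows that every Euclidean limit point inside it is non-singular. Completeness then follows because any $D_h$-Cauchy sequence eventually enters, and hence remains in, such a $D_h$-compact ball.

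The main obstacle is precisely this last verification. In the supercritical regime $S$ is uncountable and Euclidean-dense, so a priori a $D_h$-Cauchy sequence of non-singular points could Euclidean-accumulate on some $s \in S$ without possessing a $D_h$-limit in $\mathbb C \setminus S$. Ruling this out requires quantitative almost sure control showing that, uniformly over potential limit points $s$, the $D_h$-distance from a fixed non-singular $z$ into shrinking Euclidean neighborhoods of $s$ diverges --- intuitively, because the GFF is very large near singular points and Weyl scaling (Axiom~\ref{axiomweyl}) then forces any path entering such a neighborhood to have diverging $D_h$-length. Carrying this out at the required level of uniformity is the main technical difficulty, and it is what separates the supercritical case from the subcritical case, where $S = \emptyset$ and the $D_h$-topology agrees with the Euclidean topology.
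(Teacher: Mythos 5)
The paper does not prove Proposition~\ref{prop:geodesic_existence}; it is quoted verbatim from \cite[Proposition~1.12]{pfeffer2021weak} and used as a black box. So there is no in-paper proof to compare your argument against, and the question is whether your sketch is sound on its own terms.

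Your high-level strategy --- verify finiteness, local compactness and completeness, then invoke a Hopf--Rinow-type statement for length spaces --- is the right framework. But there is a genuine gap in the local-compactness step. You argue: a closed $D_h$-ball $\bar B_{D_h}(z,R)$ around a non-singular $z$ is Euclidean-bounded; by lower semicontinuity, if $y_n\in\bar B_{D_h}(z,R)$ converges Euclidean to $y$, then $D_h(z,y)\le\liminf D_h(z,y_n)\le R$; hence $\bar B_{D_h}(z,R)$ is Euclidean-compact; therefore it is $D_h$-compact once one checks limit points are non-singular. Two problems. First, the ``once one knows that every Euclidean limit point inside it is non-singular'' caveat is actually a non-issue: lower semicontinuity already forces $D_h(z,y)\le R<\infty$, so $y$ is automatically non-singular. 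Second, and this is the real gap, Euclidean compactness of $\bar B_{D_h}(z,R)$ does \emph{not} yield $D_h$-compactness. You have produced a Euclidean-convergent subsequence $y_{n_k}\to y$ with $y$ in the ball, but you still need $D_h(y_{n_k},y)\to 0$, and lower semicontinuity of $D_h$ gives no control in that direction --- it is the wrong inequality. What is required is some form of upper semicontinuity (equivalently, that the $D_h$-topology on the non-singular set is no finer than the Euclidean topology there), and in the supercritical regime this is exactly the delicate point, because singular points, where $D_h$ blows up, are Euclidean-dense. Your sketch identifies a nearby difficulty (ruling out singular accumulation) but not this one, and this one does not follow from the ingredients you have assembled.

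A secondary remark: your finiteness step uses Axiom~\ref{axiomfinite}, which is stated for fixed deterministic compact sets $K_1,K_2$ and a fixed open $U$. To conclude that \emph{all} pairs of non-singular points are at finite $D_h$-distance on a single almost-sure event you need a countable-to-uncountable upgrade (e.g.\ via a countable dense family of circles and an additional argument letting radii shrink), which should be flagged explicitly rather than absorbed into ``concatenation.''
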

Furthermore, the geodesics connecting two given points (which exsist almost surely) is also a.s. unique. In fact, we have the following slightly more general statement from \cite[Lemma~2.7]{ding2021uniqueness}.
\begin{lemma}[\cite{ding2021uniqueness}]\label{lem:uniqgeodesic}
Let $K_1, K_2 \subset  \mathbb C$ be deterministic disjoint Euclidean-compact sets. Almost surely, there is a unique $D_h$-geodesic from $K_1$ to $K_2$.
\end{lemma}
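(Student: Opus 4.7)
The plan is to establish the lemma in two steps: existence of a $D_h$-geodesic from $K_1$ to $K_2$, and then its uniqueness. The main conceptual input will be the already known a.s.\ uniqueness of the $D_h$-geodesic between two \emph{fixed} points of $\mathbb C$, a consequence of Theorem~\ref{thm:lqgexistunique} together with \cite{pfeffer2021weak}, and the real work is to lift this from points to compact sets via a Cameron--Martin / Weyl-scaling perturbation.

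For existence, I would enclose $K_1 \cup K_2$ in a bounded open connected set $U$ and apply Axiom~\ref{axiomfinite} to obtain $D_h(K_1, K_2) \le D_h(K_1, K_2; U) < \infty$ almost surely. Lower semicontinuity of $D_h$ on $\mathbb C \times \mathbb C$ combined with compactness of $K_1 \times K_2$ yields a minimising pair $(z_*, w_*) \in K_1 \times K_2$ with $D_h(z_*, w_*) = D_h(K_1, K_2)$; a geodesic between them is then produced from a minimising sequence of paths by a standard Arzel\`a--Ascoli argument after reparametrising by $D_h$-arclength, using Axiom~\ref{axiomlen}.

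For uniqueness, suppose toward a contradiction that on an event of positive probability there are two distinct geodesics $P \ne P'$ from $K_1$ to $K_2$, with endpoint pairs $(z,w)$ and $(z',w')$. If $(z,w) = (z',w')$, the a.s.\ uniqueness between fixed points together with a countable union over rational approximations to the endpoints yields the contradiction directly. Otherwise, WLOG $z \ne z'$, so $z$ and $z'$ are two distinct minimisers of $D_h(\cdot, K_2)$ on $K_1$. Covering $K_1$ by small rational dyadic squares, on a positive probability sub-event two disjoint squares $Q, Q'$ (both disjoint from $K_2$) each contain one of the minimisers. I would then choose a smooth nonnegative bump $\phi$ supported in $Q$: by Weyl scaling (Axiom~\ref{axiomweyl}), $D_{h+\phi}(z, K_2) > D_h(z, K_2)$ since any path from $z$ must traverse $\mathrm{supp}\,\phi$, whereas $D_{h+\phi}(z', K_2) = D_h(z', K_2)$ because an optimal path from $z'$ can be chosen to avoid $Q$. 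Mutual absolute continuity of $h$ and $h+\phi$ (Cameron--Martin for the whole-plane GFF) propagates this strict inequality back to the original law and contradicts the joint-minimiser property.

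The main obstacle will be making the perturbation argument quantitatively rigorous: the Weyl increment on the $z$-side must strictly dominate any re-routing benefit on the $z'$-side, and the support of $\phi$ must be arranged simultaneously disjoint from $K_2$ and from a near-optimal $z' \to K_2$ path. Handling this cleanly requires a countable union over rational choices of dyadic scale, square location, and bump amplitude, and uses the a.s.\ positivity of around-annulus $D_h$-distances (Definition~\ref{def:annulus_dist}) to reroute paths from $z'$ around $Q$ with controlled length.
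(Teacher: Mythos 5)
This lemma is quoted from \cite{ding2021uniqueness} without proof in the present paper, so I evaluate your argument on its own terms. Your existence step is essentially fine (and can be shortened: the minimising pair $(z_*, w_*)$ produced by lower semicontinuity and compactness satisfies $D_h(z_*, w_*) < \infty$, hence consists of non-singular points, so Proposition~\ref{prop:geodesic_existence} supplies the geodesic directly). The uniqueness step, however, has genuine gaps. First, the case $(z,w) = (z',w')$ is not handled: almost-sure uniqueness of geodesics is a statement about each \emph{fixed deterministic} pair, and a countable union over rational pairs covers rational pairs only; it gives nothing for the random minimising pair $(z_*,w_*)$, which is determined by $h$ and need not be rational. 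Second, in the case $z \ne z'$, the claim $D_{h+\phi}(z', K_2) = D_h(z', K_2)$ is false in general, because the geodesic $P'$ emanating from $z'$ may itself pass through $Q$, in which case your perturbation inflates both sides of the comparison. Both issues are resolved by choosing the perturbation ball differently: a small rational ball around a point of $P \setminus P'$ lying strictly between the endpoints, which exists whenever the traces are distinct and, by compactness, is at positive Euclidean distance from $P'$ and from $K_1 \cup K_2$.

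Most seriously, the concluding step ``mutual absolute continuity $\ldots$ propagates this strict inequality back to the original law and contradicts the joint-minimiser property'' is not a valid inference. Equivalence of the laws of $h$ and $h+\phi$ identifies null events; it does not transport the pathwise inequality $D_{h+\phi}(z,K_2) > D_{h+\phi}(z',K_2)$ into any statement about the minimisers of $D_{h+\phi}(\cdot, K_2)$ over $K_1$, which are new random points determined by $h+\phi$. What your perturbation actually establishes, once the location of the bump is fixed as above, is the following: writing $g(s) := D_{h+s\phi}(K_1, K_2)$, on the bad event the non-decreasing function $g$ is constant on $[0,\infty)$ (because $P'$ avoids $\mathrm{supp}\,\phi$) and strictly smaller than $g(0)$ for $s<0$ (because $P$ spends positive $D_h$-length in $\{\phi>0\}$). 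In other words, $s=0$ is the left endpoint of a maximal constancy interval of $g$. Since a monotone function has at most countably many such left endpoints, the set of ``bad'' shifts $s$ is almost surely Lebesgue-null, and a Fubini argument in $s$ combined with Cameron--Martin mutual absolute continuity forces the probability of the bad event at $s=0$ to vanish. Some quantitative one-parameter argument of this kind is needed to close the contradiction; the appeal to absolute continuity alone does not do it.
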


\section{Simultaneous crossings of annuli}\label{sec:crossprob}
In this section we will prove Proposition~\ref{prop:crossingprob}. Let us start with the simplest scenario  
where there is only one disk, i.e., $n = 1$ in which case Proposition~\ref{prop:crossingprob} is implied by the 
following result.
\begin{lemma}\label{lem:crossingonedisk}
For any $z \in C$ and $r > 0$, let $F_{z, r} \coloneqq \{ \text{a $D_h$-geodesic crosses $A(z, r/2, 
	r)$}\}$. Then there exists $c'(\xi) \in (0, 1)$ such that
\begin{equation}\label{eq:crossingonedisk}
	\P[ \text{{\rm a $D_h$-geodesic crosses $A(z, r/2, r)$}}\,] \le c'(\xi).
\end{equation}
\end{lemma}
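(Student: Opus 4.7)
The plan is to exhibit, for the canonical annulus $A(0, 1/2, 1)$, an event of strictly positive probability on which no $D_h$-geodesic can cross it; the lemma then follows. First I would reduce to this canonical annulus via the affine coordinate change (Axiom~\ref{axiomaffine}): the field $\widetilde h \coloneqq h(r\cdot + z) + Q\log r$ is again a whole-plane GFF plus a continuous function, and the crossing event is invariant under multiplication of the metric by a constant. Hence $\P[F_{z, r}]$ is independent of $(z, r)$, and it suffices to treat $A(0, 1/2, 1)$.

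Split $A(0, 1/2, 1)$ into the inner sub-annulus $A_{\mathrm{in}} \coloneqq A(0, 1/2, 3/4)$ and the outer sub-annulus $A_{\mathrm{out}} \coloneqq A(0, 3/4, 1)$. Let $\delta_h \coloneqq \sup_{u, v \in \overline A_{\mathrm{out}}} D_h(u, v; \overline A_{\mathrm{out}})$ denote the $D_h$-internal diameter of $\overline A_{\mathrm{out}}$ (cf.\ Definition~\ref{def:metricspace}), and consider the event
\begin{equation*}
\mathcal G \coloneqq \bigl\{\,\delta_h < \tfrac{1}{2}\, D_h(\mathrm{across}\ A_{\mathrm{in}})\,\bigr\}.
\end{equation*}
I claim that on $\mathcal G$, no $D_h$-geodesic crosses $A(0, 1/2, 1)$. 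Indeed, suppose $P$ is a $D_h$-geodesic from $a$ to $b$ containing a crossing segment $Q \subset \overline{B(0, 1)}$ with both endpoints $x, y \in \partial B(0, 1)$ and meeting $\partial B(0, 1/2)$. Then $Q$ must pass from $\partial B(0, 1)$ into $B(0, 1/2)$ and back, so a topological analysis of the components of $Q \cap \overline A_{\mathrm{in}}$ gives $\mathrm{len}(Q; D_h) \ge 2\, D_h(\mathrm{across}\ A_{\mathrm{in}})$. On the other hand, $x, y \in \overline A_{\mathrm{out}}$, so there is a path $Q' \subset \overline A_{\mathrm{out}}$ from $x$ to $y$ with $\mathrm{len}(Q'; D_h) \le \delta_h + \eta$ for arbitrarily small $\eta > 0$; on $\mathcal G$ this is strictly smaller than $\mathrm{len}(Q; D_h)$. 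Replacing $Q$ by $Q'$ inside $P$ yields a continuous path from $a$ to $b$ of strictly smaller $D_h$-length, contradicting the fact that $P$ is a geodesic.

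To show $\P[\mathcal G] > 0$, I would couple Weyl scaling (Axiom~\ref{axiomweyl}) with the Cameron-Martin theorem for the whole-plane GFF. Pick a smooth, compactly supported $\chi : \mathbb C \to \R$ with $\chi \equiv -1$ on $\overline A_{\mathrm{out}}$ and $\chi \equiv +1$ on $\overline A_{\mathrm{in}}$, and set $f \coloneqq c\chi$ for large $c > 0$. Weyl scaling gives $D_{h+f} = \e^{\xi f}\cdot D_h$, from which one deduces the deterministic bounds
\begin{equation*}
\delta_{h+f} = \e^{-\xi c}\,\delta_h\,,\qquad D_{h+f}(\mathrm{across}\ A_{\mathrm{in}}) \ge \e^{+\xi c}\, D_h(\mathrm{across}\ A_{\mathrm{in}}),
\end{equation*}
where the second bound uses that any path between $\partial B(0, 1/2)$ and $\partial B(0, 3/4)$ spends at least $D_h(\mathrm{across}\ A_{\mathrm{in}})$ of $D_h$-length inside $\overline A_{\mathrm{in}}$, where $\e^{\xi f} \equiv \e^{+\xi c}$. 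Since $\delta_h$ and $D_h(\mathrm{across}\ A_{\mathrm{in}})$ are a.s.\ finite and strictly positive (finiteness axiom~\ref{axiomfinite} and the discussion after Definition~\ref{def:annulus_dist}), for $c$ sufficiently large $\mathcal G$ holds for $D_{h+c\chi}$ with probability arbitrarily close to $1$. Cameron-Martin absolute continuity of the laws of $h$ and $h + c\chi$ (valid since $c\chi$ has finite Dirichlet energy) transfers this to $\P[\mathcal G] > 0$ under the original law, and setting $c'(\xi) \coloneqq 1 - \P[\mathcal G] < 1$ completes the proof.

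The main obstacle I anticipate lies in making the re-routing argument fully rigorous: one needs to choose $Q$ as a maximal sub-excursion of $P$ inside $\overline{B(0, 1)}$ among those touching $\partial B(0, 1/2)$, so that the path obtained after swapping $Q$ for $Q'$ is continuous and its $D_h$-length is exactly $\mathrm{len}(P; D_h) - \mathrm{len}(Q; D_h) + \mathrm{len}(Q'; D_h)$ in the sup-based definition of length. The Cameron-Martin step is essentially standard modulo the usual conventions for the whole-plane GFF normalization, which can be absorbed into the choice of $\chi$.
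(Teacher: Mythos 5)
Your overall strategy---reroute around the annulus on a good event, pin the good event down with Weyl scaling plus a Cameron--Martin shift---is exactly the approach the paper takes, and the rerouting topology and the Cameron--Martin step are essentially right. However, there is a genuine gap in the definition of the event $\mathcal G$ that makes the argument fail for $\xi > \xi_{\rm crit}$.

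You set $\delta_h \coloneqq \sup_{u, v \in \overline A_{\mathrm{out}}} D_h(u, v; \overline A_{\mathrm{out}})$, the internal $D_h$-diameter of $\overline A_{\mathrm{out}}$. In the supercritical phase the LQG metric has a Euclidean-dense set of singular points $z$ for which $D_h(z, w) = \infty$ for every $w \ne z$, and $\overline A_{\mathrm{out}}$ necessarily contains such points. Hence $\delta_h = \infty$ almost surely, so $\mathcal G$ is a.s.\ empty and $\P[\mathcal G] = 0$. The Cameron--Martin shift cannot rescue this: under $h + c\chi$ one still has $\delta_{h+c\chi} = \e^{-\xi c}\delta_h = \infty$. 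The finiteness axiom only controls distances between \emph{two disjoint compact connected sets inside an open set}, not suprema over all pairs of points, and the remark you cite after Definition~\ref{def:annulus_dist} asserts finiteness of the ``around'' and ``across'' distances, not of internal diameters.

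The fix is the one the paper uses: replace the internal diameter by $D_h(\text{around }A')$ for a suitable sub-annulus $A'$ of $A_{\mathrm{out}}$, i.e.\ the infimum of $D_h$-lengths of loops in $A'$ separating its boundaries. This is a.s.\ finite for all $\xi > 0$. A crossing segment $Q$ meets any such loop $\ell$, and the portion of $Q$ between its first and last intersections with $\ell$ still contains the excursion to $\partial B(0,1/2)$, so it crosses $A_{\mathrm{in}}$ at least twice; rerouting that portion along a sub-arc of $\ell$ (rather than replacing all of $Q$, which is what forces you to control the full diameter) produces the required contradiction. With that substitution your argument matches the paper's event $G_{z,r}$ and goes through for all $\xi$.

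A smaller remark: the reduction to $A(0, 1/2, 1)$ via Axiom~\ref{axiomaffine} is fine, but note that $h(r\cdot + z)$ is a whole-plane GFF \emph{modulo an additive constant}; one should observe, as the paper does, that the crossing event is unchanged under adding a constant to $h$ (Weyl scaling multiplies $D_h$ by a constant, which does not affect geodesics), and then invoke $h(r\cdot + z) - h_r(z) \stackrel{\rm law}{=} h - h_1(0)$.
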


\begin{proof}
The argument presented below is similar to the one given in the proof of 
\cite[Lemma~4.3]{millerQiangeodesics20}  and will be used as a reference for our later arguments. We will 
bound $\P[F_{z, r}]$ by invoking a new event $G_{z, r}$ which involves a comparison between two special types of $D_h$-distances. To this end let us define 
\begin{equation*}
	G_{z, r} \coloneqq \{D_h(\text{around $A(z, 3r/4, 7r/8)$})  <  D_h(\text{across $A(z,r/2,  5r/8)$})\}.
\end{equation*}
and  observe that $G_{z, r} \subset F_{z, r}^c$. Indeed, on the event $G_{z, r}$, we can reroute any path 
$P$ crossing $A(z, r/2, r)$ through a path separating $A(z, 3r/4, 7r/8)$ such that the resulting path $P'$ has 
(strictly) smaller $D_h$-length. In particular {\em no} geodesic can cross $A(z, r/2, r)$ on the event $G_{z, r}$. 

\smallskip

As we now explain, probability of the event $G_{z, r}$ is independent of $z$ and $r$, i.e., $\P[G_{z, r}] = 
\P[G_{0, 1}]$. To see this notice that due to the Weyl scaling (Axiom~\ref{axiomweyl}) adding a (random) 
constant to $h$ only changes the metric $D_h$ by a multiplicative constant and consequently the 
event $G_{z, r}$ is a.s. determined by $h - h_r(z)$ where $h_r(z)$ is the average of $h$ over the circle 
$\partial B(z, r)$ (see~\cite[Section~3.1]{DupShe11} for an introduction to the circle average processes). Since the whole-plane GFF satisfies, for any $z \in \mathbb C$ and $r > 0$, 
\begin{equation}\label{eq:scale_trans_invar}
	h(r \cdot + z) - h_r(z) \stackbin{{\rm law}}{=} h - h_1(0)
\end{equation}
(see, e.g.~\cite[Section~2.2.3]{ding2021introduction}), we can immediately deduce $\P[G_{z, r}] = \P[G_{0, 
	1}]$. 

Hence it suffices to show that $\P[G_{0, 1}] > 0$. To this end let us recall from the previous section 
that both $D_h(\text{around $A(3/4, 7/8)$})$ and $D_h(\text{across $A(1/2, 5/8)$})$ are finite, positive 
random variables and hence there exists $C = C(\xi) \ge 1$ such that
\begin{equation}\label{eq:basic_comparison}
	\P[D_h(\text{around $A(3/4, 7/8)$})  <  C D_h(\text{across $A(1/2, 5/8)$})] > \frac 12
\end{equation}
where $A(r, R) \coloneqq A(0, r, R)$. Now consider a non-negative, radially symmetric (bump) function $\phi \in C_c^{\infty}(\mathbb C)$ 
supported in $B(0, 3/4)$ which is equal to 1 on $B(0, 5/8)$. As $(\mathbb C, D_h)$ is a length space 
(Axiom~\ref{axiomlen}), $D_h(\text{across $A$})$ is determined by the internal metric of $D_h$ on $A$ and 
so is $D_h(\text{around $A$})$ by definition. Hence, with $C$ as in the last display, we get for the metric 
$D_{h + \xi^{-1}\log C \phi}$,	
\begin{equation*}
	\P[D_{h + \xi^{-1}\log C \phi }(\text{around $A(3/4, 7/8)$})  <  D_{h + \xi^{-1} \log C \phi}(\text{across $A(1/2, 5/8)$})] > \frac 12.
\end{equation*}
Indeed, by the Weyl scaling, the $D_{h + \xi^{-1}\log C \phi}$-internal metric is $\e^{\xi \cdot \xi^{-1}\log C} = 
C$ times the $D_h$-internal metric inside $A(1/2, 5/8)$ whereas it is same as the 
$D_h$-internal metric in $A(3/4, 7/8)$ which yields the above bound in view of 
\eqref{eq:basic_comparison}. However, the laws of $h$ and $h + \phi$ are mutually absolutely continuous 
(see, e.g., \cite{MillerSheffield_Imaginary12016} for a proof) and consequently there exists $c'(\xi) < 1$ such that
$$\P[G_{0, 1}] = \P[G_{z, r}] \ge 1 - c'(\xi)$$ which gives us $\P[F_{z, r}] \le c'(\xi)$, i.e., 
\eqref{eq:crossingonedisk}.
\end{proof}

\smallskip

Now we come to main part of the proof where we have to deal with multiple disks. To this end let us 
consider the collection of disks $\{A_j\}_{j = 1}^n \equiv \{B(z_j, r_j)\}_{j = 1}^n$ that is $(\lambda, 
\nu)$-balanced for some $\lambda \ge \lambda_0 > 1$ and $\nu > 1$. In order to obtain a near-geometric decay as in \eqref{eq:crossingprob}, we will produce two collections of events $\{H_{z_j, r_j}\}_{j = 
1}^n$ and $\{\widetilde F_{z_j, r_j}\}_{j = 1}^n$ satisfying the following properties for all $j \in [1, n]$.
\smallskip
\begin{enumerate}[itemsep= 1.5ex, label = {\bf P\arabic*}]
\item \label{property1} $F_{z_j, r_j} \subset \widetilde F_{z_j, r_j} \cup H_{z_j, r_j}$.
\item \label{property2}  $\widetilde F_{z_j, r_j}$ is independent of the $\sigma$-algebra generated by 
$\big(\{H_{z_i, r_i}\}_{i = 1}^n, \{\widetilde F_{z_i, r_i}\}_{i \ne j}\big)$.
\item\label{property3} There exists $c'(\xi) \in (0, 1)$ such that $\P[\widetilde F_{z_j, r_j}] \le c'(\xi)$.
\end{enumerate}
\smallskip
Assuming that two families of events exist satisfying \ref{property1}--\ref{property3}, we can immediately 
write with $\mathcal S \coloneqq \{j \in [1, n]:  H_{z_j, r_j}^c \text{ occurs}\} \subset [1, n]$,
\begin{equation}\label{eq:exploration}
\begin{split}
	\P\Big[\, \bigcap_{j = 1}^n F_{z_j, r_j}   \Big]\, \stackrel{\ref{property1}}{\le} \, \P\Big[\bigcap_{j \in \mathcal S}\widetilde F_{z_j, r_j} \Big]\,  \stackrel{\ref{property2} + \ref{property3}}{\le} \, \E[ c'(\xi)^{|\mathcal S|} ] 
\end{split}
\end{equation}
where in the 
second step we also used the fact that the random set $\mathcal S$ is determined by the events $\{H_{z_j, 
r_j}\}_{j = 1}^n$. This leads to the bound
\begin{equation}\label{eq:bnd_F}
\P\Big[\, \bigcap_{j = 1}^n F_{z_j, r_j}  \Big]  \le c'(\xi)^{K} + \P[|\mathcal S| < K]
\end{equation}
for any $K \in [1, n]$ which yields Proposition~\ref{prop:crossingprob} provided we also have
\begin{equation}\label{eq:tail_bnd_S}
\P[ |\mathcal S| < c n] \le C \e^{- c(\lambda_0) \frac{m \sqrt{\nu}}{\log \e \nu}}
\end{equation}
(recall $m$ and also that $n \ge m\nu/100$ from Definition~\ref{def:goodcollection}).

\smallskip

Having laid out the basic strategy, we now proceed to defining the events $\{\widetilde F_{z_j, r_j}\}_{j = 1}^n$ and $\{H_{z_j, r_j}\}_{j = 1}^n$ to which end we will use the {\em Markov decomposition} of $h$ as 
stated below (see, e.g., \cite{She07, werner2020lecture, berestycki2021gaussian}  and \cite[Section~3.2]{SheffieldWelding2016} for a brief review on the whole-plane GFF in particular). 

\vspace{0.1cm}

\noindent For any $z \in \mathbb C$ and $r > 0$, $h$ can be decomposed as 
$$h = \widetilde h_{z, r} + \widehat h_{z, r}$$ where $\widetilde h_{z, r}$ is a GFF on $B(z, r)$ with zero 
boundary condition and $\widehat h_{z, r}$ is a distribution modulo absolute constant that is {\em harmonic} on 
$B(z, r)$ and agrees with $h$ in $\mathbb C \setminus B(z, r)$. Furthermore, $\widetilde h_{z, r}$ is 
independent of $\mathcal F_{z, r}$ --- the $\sigma$-algebra generated by $h|_{\mathbb C \setminus B(z, r) }$ (see, e.g., \cite[(2.2)]{ding2021uniqueness} for a precise definition). Note that $\widehat h_{z, r}$ is 
measurable w.r.t. $\mathcal F_{z, r}$. 

\smallskip

At this point we would like to draw the reader's attention to the fact 
that $h$ is treated in this decomposition as a distribution modulo additive constant, i.e., without any 
particular choice of normalization. In order to use the aforementioned independence between  $\mathcal F_{z, r}$ and $\widetilde h_{z, r}$ {\em after} normalizing $h$ so that $h_s(w) = 0$ for some $s > 0$ and $w \in \mathbb C$ (recall that $s = 1$ and $w = 0$ in our 
preexisting choice), we would need $B(z, r)$ to be disjoint from $\partial B(w, s)$. However, changing the normalization to $h_s(w) = 0$ only amounts to subtracting the (random) constant $h_s(w)$ from $h$ and 
hence, by the Weyl scaling, changes $D_h$ by the (random) factor $\e^{-\xi h_s(w)}$. This does not affect 
the relative distances nor the geodesics and hence the events $F_{z, r}$ and $G_{z, r}$ are invariant w.r.t. 
the choice of the circle on which we normalize $h$. Therefore, in the rest of the paper we assume that 
$\partial B(w, s)$ is {\em disjoint} from the union of $B(z_j, r_j); j \in [1, n]$ so that we can use the Markov 
property inside each $B(z_j, r_j)$ with $h$ normalized in this manner. 

\smallskip

Now for some $M > 0$ whose precise value would be chosen in Lemma~\ref{lem:max_dev} below, let us 
define the events $\widetilde F_{z, r} = \widetilde F_{z, r; M}$ and $H_{z, r} = H_{z, r; M}$ as follows:
\begin{equation}\label{def:tildeF_H}
\begin{split}
	\widetilde F_{z, r} &\coloneqq 	\big\{ D_{\widetilde h_{z, r}}(\text{around $A(z, 3r/4, 7r/8)$})  \ge
	\e^{-\xi M} D_{\widetilde h_{z, r}}(\text{across $A(z, r/2, 5r/8)$})\big\}, \text{ and}\\
	H_{z, r} & \coloneqq \big\{\sup_{u, v \in B(z, 7r/8)} \, |\widehat h_{z, r} (u) -  \widehat h_{z, r}(v) |  \ge M \big\}
\end{split}
\end{equation}
where $D_{\widetilde h_{z, r}}$ is the LQG metric on $B(z, r)$ associated with $\widetilde h_{z, r}$ 
(recall the discussion from Section~\ref{sec:lqgdef}). We then have:
\begin{lemma}\label{lem:tildeF_H}
Consider a collection of disjoint disks $\{B(z_j, r_j)\}_{j = 1}^n$. Then the families of events $\{\widetilde 
F_{z_j, r_j}\}_{j = 1}^n$ and $\{H_{z_j, r_j}\}_{j = 1}^n$ defined as in \eqref{def:tildeF_H} satisfy 
properties~\ref{property1}--\ref{property3} with $c'$ depending only on $\xi$ and $M$.
\end{lemma}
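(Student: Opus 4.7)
Plan. I would verify properties \ref{property1}--\ref{property3} in turn, in each case exploiting the Markov decomposition $h = \widetilde h_{z,r} + \widehat h_{z,r}$ together with one of the axioms of the LQG metric.

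For \ref{property1}, I would show that on the event $\widetilde F_{z_j, r_j}^c \cap H_{z_j, r_j}^c$ one lands in an internal-metric version of the event $G_{z_j, r_j}$ from the proof of Lemma~\ref{lem:crossingonedisk}, which already precludes a $D_h$-geodesic from crossing $A(z_j, r_j/2, r_j)$. Fix $j$ and set $a \coloneqq \inf_{B(z_j, 7r_j/8)} \widehat h_{z_j, r_j}$, $b \coloneqq \sup_{B(z_j, 7r_j/8)} \widehat h_{z_j, r_j}$, so $b - a < M$ on $H_{z_j, r_j}^c$. Since $\widehat h_{z_j, r_j}$ is harmonic, hence continuous, inside $B(z_j, r_j)$, the locality and Weyl scaling axioms together identify the internal metric $D_h(\cdot, \cdot; B(z_j, r_j))$ with $\e^{\xi \widehat h_{z_j, r_j}} \cdot D_{\widetilde h_{z_j, r_j}}(\cdot, \cdot; B(z_j, r_j))$. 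For any path confined to $B(z_j, 7r_j/8)$ its $D_h$-length and its $D_{\widetilde h_{z_j, r_j}}$-length therefore differ by a multiplicative factor lying in $[\e^{\xi a}, \e^{\xi b}]$. Applying the upper bound to around-paths in $A(z_j, 3r_j/4, 7r_j/8)$, the lower bound to across-paths in $A(z_j, r_j/2, 5r_j/8)$, and finally $\widetilde F_{z_j, r_j}^c$ yields
\[
D_h(\text{around } A(z_j, 3r_j/4, 7r_j/8)) \,<\, \e^{\xi(b - a - M)} D_h(\text{across } A(z_j, r_j/2, 5r_j/8)) \,<\, D_h(\text{across } A(z_j, r_j/2, 5r_j/8)),
\]
and the rerouting argument from Lemma~\ref{lem:crossingonedisk} (which is local to $B(z_j, r_j)$ and hence functions perfectly well with internal distances) then excludes any $D_h$-geodesic from crossing $A(z_j, r_j/2, r_j)$.

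For \ref{property2}, I would combine the independence of $\widetilde h_{z_j, r_j}$ from $\mathcal F_{z_j, r_j}$ with the disjointness of the disks. The event $\widetilde F_{z_j, r_j}$ is measurable with respect to $\widetilde h_{z_j, r_j}$, while $H_{z_i, r_i}$ is $\mathcal F_{z_i, r_i}$-measurable for every $i$ (since $\widehat h_{z_i, r_i}$ is). For $i \ne j$, disjointness places $\overline{B(z_i, r_i)} \subset \mathbb C \setminus B(z_j, r_j)$, so $\widetilde h_{z_i, r_i}$ and $\widehat h_{z_i, r_i}$, and hence $\widetilde F_{z_i, r_i}$ and $H_{z_i, r_i}$, are all $\mathcal F_{z_j, r_j}$-measurable; the required independence follows.

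For \ref{property3}, I would first invoke the scale invariance $\widetilde h_{z, r}(r\cdot + z) \stackrel{d}{=} \widetilde h_{0, 1}$ of the zero-boundary GFF together with the affine coordinate change axiom (which rescales both the around and across distances by $r^{\xi Q}$) to reduce to the case $(z, r) = (0, 1)$. Positivity and finiteness of $D_{\widetilde h_{0, 1}}(\text{around})$ and $D_{\widetilde h_{0, 1}}(\text{across})$ can be transferred from the corresponding statements for the whole-plane metric via the Markov decomposition and Weyl scaling on $B(0, 7/8)$ (where $\widehat h_{0, 1}$ is a.s.\ locally bounded), so there exists $C = C(\xi) \ge 1$ with $\P\bigl[D_{\widetilde h_{0, 1}}(\text{around}) < C\,D_{\widetilde h_{0, 1}}(\text{across})\bigr] > 1/2$. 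To upgrade the constant $C$ to $\e^{-\xi M}$, I would fix a smooth bump function $\phi$ supported in $B(0, 3/4)$ that equals $1$ on $B(0, 5/8)$ and set $K \coloneqq M + \xi^{-1} \log C$: by Weyl scaling, adding $K\phi$ multiplies the across distance by $\e^{\xi K} = C\e^{\xi M}$ and leaves the around distance unchanged, so
\[
\P\bigl[D_{\widetilde h_{0, 1} + K\phi}(\text{around}) < \e^{-\xi M} D_{\widetilde h_{0, 1} + K\phi}(\text{across})\bigr] > 1/2.
\]
Since $K\phi$ belongs to the Cameron--Martin space of the zero-boundary GFF on $B(0, 1)$, the laws of $\widetilde h_{0, 1}$ and $\widetilde h_{0, 1} + K\phi$ are mutually absolutely continuous, giving $\P[\widetilde F_{0, 1}^c] > 0$ and hence $\P[\widetilde F_{z, r}] \le c'(\xi, M) < 1$.

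The main obstacle is likely the careful and repeated use of the Weyl scaling axiom on \emph{internal} metrics of disks. This is legitimate by the locality axiom, but in each application one must check that the relevant optimizing paths genuinely remain in the region where the multiplicative prefactor is controlled (inside $B(z_j, 7r_j/8)$ for \ref{property1}, and on the sets $\{\phi = 1\}$ or $\{\phi = 0\}$ for the bump argument in \ref{property3}); moreover, one must ensure that basic properties of the whole-plane $D_h$ (positivity and finiteness of around/across distances, applicability of Weyl scaling) transfer to the zero-boundary metric $D_{\widetilde h_{0,1}}$ via local absolute continuity on $B(0, 7/8)$.
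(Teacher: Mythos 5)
Your proof is correct and follows essentially the same approach as the paper: Markov decomposition plus (local) Weyl scaling for \ref{property1}, disjointness of the disks together with the domain Markov property for \ref{property2}, and reduction to $(z,r)=(0,1)$ followed by a bump-function/absolute-continuity argument for \ref{property3}. You fill in some details the paper elides (the explicit $a,b$ bookkeeping in \ref{property1}; the direct scale invariance of the zero-boundary GFF plus Axiom~\ref{axiomaffine} rather than the paper's reference to \eqref{eq:scale_trans_invar}), but the substance is the same.
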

\begin{proof}
We verify each of the properties \ref{property1}--\ref{property3} below for the collections $\{\widetilde 
F_{z_j, r_j}\}_{j = 1}^n$ and $\{H_{z_j, r_j}\}_{j = 1}^n$.

\vspace{0.3cm}

\noindent{\em Property~\ref{property1}.}	
It immediately follows from \eqref{def:tildeF_H} and the Weyl scaling that $\widetilde F_{z, r}^c \cap H_{z, 
	r}^c \subset F_{z, r}^c$ (recall the argument previously given for $G_{z, r} \subset F_{z, r}^c$ in the proof of Lemma~\ref{lem:crossingonedisk}). 

\vspace{0.25cm}

\noindent{\em Property~\ref{property2}.} Since $\widetilde F_{z, r}$ is determined by $\widetilde h_{z, r}$ and the disks $B(z_j, r_j)$'s are disjoint, we get that the events $\{H_{z_i, r_i}\}_{i = 1}^n, \{\widetilde F_{z_i, 
	r_i}\}_{i \ne j}$ are measurable relative to $\mathcal F_{z_j, r_j}$. The Markov property then gives us the required independence between $\widetilde F_{z_j, r_j}$ and $\big(\{H_{z_i, r_i}\}_{i = 1}^n, \{\widetilde F_{z_i, r_i}\}_{i \ne j}\big)$ for all $j \in [1, n]$. 

\vspace{0.25cm}

\noindent{\em Property~\ref{property3}.} For the upper bound on the probability of $\widetilde F_{z, r}$, first 
notice that due to the scale and translation invariance of 
$h$ (recall \eqref{eq:scale_trans_invar}) as well as the Weyl scaling we have $\P[\widetilde 
F_{z, r}] = \P[\widetilde F_{0, 1}]$. It is known from \cite[Lemma~4.1]{millerQiangeodesics20} that the laws 
of $h|_{B(0, 7/8)} - h_1(0)$ and $\widetilde h_{0, 1}|_{B(0, 7/8)}$ are mutually absolutely continous and 
hence both $D_{\widetilde h_{0, 1}}(\text{around $A(3/4, 7/8)$})$ and $D_{\widetilde h_{0, 1}}(\text{across $A(1/2, 5/8)$})$ are finite and positive random variables (note that they are both determined by the internal 
metric of $D_{\widetilde h_{0, 1}}$ on  $B(0, 7/8)$). Given this fact, we can deduce an upper bound on 
$\P[\widetilde F_{0, 1}]$ (and hence $\P[\widetilde F_{z, r}]$) depending only on $\xi$ and $M$ by the 
exact same argument as that for the upper bound on $\P[G_{0, 1}]$. 
\end{proof}

Finally, it remains to check whether \eqref{eq:tail_bnd_S} also holds for these events.
\begin{lemma}\label{lem:max_dev}
Suppose $\{B(z_j, r_j)\}_{j = 1}^n$ is $(\lambda, \nu)$-balanced for some $\lambda \ge \lambda_0 > 1$ and 
$\nu > 1$ and the events $\{H_{z_j, r_j}\}_{j = 1}^n$ are defined by \eqref{def:tildeF_H}. Then there exists an 
absolute constant $M > 0$ such that with $\mathcal S$ defined as in 
\eqref{eq:exploration}, we have
\begin{equation}\label{eq:max_dev}
	\P [|\mathcal S| \le n/2] \le C \e^{- c(\lambda_0) \frac{m \sqrt{\nu} }{\log \e \nu}} .
\end{equation}
\end{lemma}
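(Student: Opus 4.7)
The strategy for \eqref{eq:max_dev} is to first establish a uniform one-disk bound $\P[H_{z,r}] \le 1/4$ for a sufficiently large absolute constant $M$, and then to exploit the hierarchical separation structure of a $(\lambda,\nu)$-balanced collection in order to upgrade this to an exponential tail estimate on $|\mathcal S^c| = \sum_{j=1}^n \mathbf 1_{H_{z_j,r_j}}$.

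The one-disk bound follows from scale-translation invariance. Combining the Markov decomposition with \eqref{eq:scale_trans_invar} shows that the law of $\widehat h_{z,r}(r\cdot + z)$, viewed modulo additive constants, does not depend on $(z,r)$. Consequently the oscillation $\sigma_{z,r} := \sup_{u,v\in B(z, 7r/8)} |\widehat h_{z,r}(u)-\widehat h_{z,r}(v)|$ is equidistributed with $\sigma_{0,1}$. Since $\widehat h_{0,1}$ is almost surely harmonic in $B(0,1)$, the centered Gaussian process $\{\widehat h_{0,1}(u)-\widehat h_{0,1}(0) : u\in \overline{B(0,7/8)}\}$ has almost surely continuous paths and uniformly bounded pointwise variance (the Green function of the unit disk is bounded away from the boundary). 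Borell--TIS then gives Gaussian tails for $\sigma_{0,1}$, so fixing $M$ large enough ensures $\P[H_{z,r}^c] \ge 3/4$ for every $(z,r)$.

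For the multi-disk bound I would process the scales $k_1 < \cdots < k_m$ from coarsest to finest, using the iterated Markov decomposition inside the nested annuli $B(z_j, \lambda^{k/2} r_j)\setminus B(z_j, \lambda^{(k-1)/2} r_j)$, $k\ge 1$, to write $\widehat h_{z_j,r_j}$ as a sum of scale-localised Gaussian layers, each coming from an independent zero-boundary GFF in one of these annuli. Condition C1 guarantees that the first few of these annuli are pairwise disjoint across the disks in a single scale-group, so the corresponding layers --- which by standard interior harmonic estimates dominate the oscillation $\sigma_j$ --- are mutually independent within each scale-group. Applying a Chernoff-type bound to each scale-group separately yields that all but a small fraction of the disks at scale $i$ satisfy $\sigma_j\le M$, except with probability $\exp(-c(\lambda_0)\sqrt{\nu}/\log(e\nu))$. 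The logarithmic loss $\log(e\nu)$ absorbs the residual variance contributed by the coarser scales (whose harmonic contributions on a fine disk accumulate logarithmically in $\nu$), and the $\sqrt{\nu}$ rate reflects the cost of this accumulation. Multiplying the scale-group bounds together, and summing over how the $n/2$ failures may distribute across scales, produces the required exponent $m\sqrt{\nu}/\log(e\nu)$.

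The principal obstacle is the cross-scale correlation: a fine disk lies inside many coarser disks, and their Markov remainders inject a dependence between the fine-scale events $H_{z_j,r_j}$. The core of the argument will be to show quantitatively that this dependence contributes only a logarithmic penalty in $\nu$, while preserving effective independence within each scale-group --- this is precisely where the regularity estimate for the harmonic extensions advertised in Section~\ref{sec:crossprob} does the real work.
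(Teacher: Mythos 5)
Your one--disk step is sound and close in spirit to what the paper actually does: combine the variance estimate for the harmonic part with Fernique/Borell--TIS to get $\P[H_{z,r}]\le 1/4$ for a universal $M$, using scale--translation invariance to reduce to $(z,r)=(0,1)$. The difficulty --- and it is where the entire content of the lemma lives --- is the multi--disk step, and here your plan has a genuine gap.

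You propose to build $\widehat h_{z_j,r_j}$ out of ``scale-localised Gaussian layers'' by iterating the Markov decomposition over concentric annuli, and then to run an approximate--independence Chernoff argument within each scale-group, finishing by multiplying the groups together. Several claims in this outline are not justified and, as stated, would not produce the exponent $m\sqrt\nu/\log(e\nu)$. First, even granting the iterated Markov tower, the oscillation of $\widehat h_{z_j,r_j}$ on $B(z_j,7r_j/8)$ is \emph{not} dominated by the first few layers; contributions from all outer scales decay only like $r_j/d$, so summing them does not let you discard the far field, and ``within-group independence'' is only approximate. Second, a Chernoff bound over $\sim\nu$ genuinely independent indicators would give failure probability $\exp(-c\nu)$, not $\exp(-c\sqrt\nu/\log(e\nu))$; the attribution of the $\sqrt\nu$ rate to ``logarithmic accumulation from coarser scales'' is never made precise and I do not see how it emerges from the layered decomposition (a back-of-envelope geometric-series estimate of the coarser layers' contributions would suggest $O(1)$, not a logarithm). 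Third, multiplying the per-scale-group bounds presupposes the very cross-scale independence you acknowledge is missing. In short, the proposal records the right qualitative obstacle (cross-scale correlation of the harmonic parts) but does not supply the mechanism that resolves it.

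The paper's route is structurally different and sidesteps the conditional/layered bookkeeping. It bounds the \emph{variance of the total sum} $\sum_j \sup_{u_j,v_j\in B(z_j,7r_j/8)}\big(\widehat h_{z_j,r_j}(u_j)-\widehat h_{z_j,r_j}(v_j)\big)$, then applies a single Borell--TIS inequality. The key quantitative input is the cross-covariance bound
\begin{equation*}
\big|\cov\!\big[\widehat h_{z_j,r_j}(u_j)-\widehat h_{z_j,r_j}(v_j),\;\widehat h_{z_k,r_k}(u_k)-\widehat h_{z_k,r_k}(v_k)\big]\big|\;\le\; C\,\frac{r_j r_k}{d_{j,k}^2},
\end{equation*}
obtained by using the Markov decomposition at the intermediate ball $B(z_j,d_{j,k})$ together with Lemma~\ref{lem:variance_estimate}. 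Summing this over all pairs under the $(\lambda,\nu)$-balanced geometry --- carefully splitting the inner scale sum into regimes $\lambda^{2(k_\ell-k_i)}\lessgtr\nu$ --- yields a total variance $\lesssim m\nu^{3/2}\log(e\nu)$, and since $n\asymp m\nu$, Borell--TIS produces exactly $\exp\!\big(-c\,n^2/(m\nu^{3/2}\log(e\nu))\big)\le\exp\!\big(-c\,m\sqrt\nu/\log(e\nu)\big)$. If you want to keep your scale-group Chernoff picture, you would need to actually carry out the layered decomposition, bound the conditional oscillations layer by layer, and then justify the claimed per-group rate --- but this amounts to re-deriving the covariance control in a harder coordinate system, and it is cleaner to compute the variance of the sum directly.
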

Combined with \eqref{eq:bnd_F} and Lemma~\ref{lem:tildeF_H}, this finishes the proof of 
Proposition~\ref{prop:crossingprob}. The proof of Lemma~\ref{lem:max_dev} will be given in the next 
subsection. \qed

\subsection{Proof of Lemma~\ref{lem:max_dev}}
Notice that Lemma~\ref{lem:max_dev} is essentially a statement about the regularity of harmonic extensions. In the case when the sets $A_j$'s concentric annuli, such estimates have been used many times 
in the LQG literature; see, e.g., \cite[Proposition~4.3]{millerQiangeodesics20}. However, our situation is 
quite different from theirs and necessitates a different approach. One key ingredient in the proof is the following variance estimate.
\begin{lemma}\label{lem:variance_estimate}
Let $z \in \mathbb C$, $r > 0$ and $s \in (0, 1)$. Then there exists $C = C(s) > 0$ such that for any two 
points $u, v \in B(z, sr)$,
\begin{equation}\label{eq:squared_diff}
	\E[ (\widehat h_{z, r}(u) - \widehat h_{z, r}(v))^2 ] \le C \Big( \frac{|u - v|}{r}\Big)^2.
\end{equation}
\end{lemma}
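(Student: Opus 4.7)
The plan is first to reduce via scaling to the canonical case $z=0$, $r=1$, and then to compute the variance of $\widehat h_{0,1}(u)-\widehat h_{0,1}(v)$ explicitly from the covariance of the harmonic part of the GFF, using a classical algebraic identity on the unit disk.

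For the scaling step, let $h'(w) \coloneqq h(rw+z)-h_r(z)$. By \eqref{eq:scale_trans_invar}, $h'$ has the same law as $h$ (both carrying the unit-circle normalization), and the Markov decomposition is compatible with this rescaling: $\widehat h_{z,r}(rw+z)-h_r(z)$ equals $\widehat{h'}_{0,1}(w)$ in distribution on $B(0,1)$. Since the random additive constant $h_r(z)$ cancels in differences, $\E[(\widehat h_{z,r}(u)-\widehat h_{z,r}(v))^2]$ equals $\E[(\widehat h_{0,1}(u')-\widehat h_{0,1}(v'))^2]$ with $u'=(u-z)/r$, $v'=(v-z)/r \in B(0,s)$. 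It thus suffices to prove \eqref{eq:squared_diff} when $z=0$ and $r=1$.

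For this case, I would read off the covariance of $\widehat h_{0,1}$ from the Markov decomposition $h=\widehat h_{0,1}+\widetilde h_{0,1}$ together with the independence of the two summands. On $B(0,1)$ the whole-plane GFF covariance is $-\log|u-v|$ (in the unit-circle normalization), while $\widetilde h_{0,1}$ has covariance equal to the Dirichlet Green's function $G_{B(0,1)}(u,v)=\log\tfrac{|1-u\bar v|}{|u-v|}$. Subtracting gives
\begin{equation*}
\mathrm{Cov}\big(\widehat h_{0,1}(u),\widehat h_{0,1}(v)\big) = -\log|1-u\bar v|, \qquad u,v\in B(0,1).
\end{equation*}
In particular $\mathrm{Var}(\widehat h_{0,1}(u)) = -\log(1-|u|^2)$, so expanding the square yields
\begin{equation*}
\E\big[(\widehat h_{0,1}(u)-\widehat h_{0,1}(v))^2\big] = \log \frac{|1-u\bar v|^2}{(1-|u|^2)(1-|v|^2)}.
\end{equation*}

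The final step invokes the classical identity $|1-u\bar v|^2 - (1-|u|^2)(1-|v|^2) = |u-v|^2$ (a direct expansion, which underlies the Schwarz--Pick lemma for the hyperbolic unit disk). This rewrites the right-hand side as $\log\!\big(1+\tfrac{|u-v|^2}{(1-|u|^2)(1-|v|^2)}\big)$; bounding $\log(1+x)\le x$ and using $|u|,|v|\le s$ gives
\begin{equation*}
\E\big[(\widehat h_{0,1}(u)-\widehat h_{0,1}(v))^2\big] \le \frac{|u-v|^2}{(1-|u|^2)(1-|v|^2)} \le \frac{|u-v|^2}{(1-s^2)^2},
\end{equation*}
which is \eqref{eq:squared_diff} with $C(s)=(1-s^2)^{-2}$. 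The only point to watch is the justification of pointwise covariance computations for the a priori distributional field $\widehat h_{0,1}$, but this is standard: almost surely $\widehat h_{0,1}$ is harmonic (hence $C^\infty$) on $B(0,1)$, so its pointwise values there are genuine jointly Gaussian random variables whose covariance is recovered by a routine passage from the distributional pairing. Given this, the lemma is essentially a short explicit calculation with no substantive obstacle.
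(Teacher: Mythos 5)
Your proof is correct, and it takes a genuinely different (and arguably cleaner) route than the paper. The paper writes $\widehat h_{0,1}(u)-\widehat h_{0,1}(v)$ as the Poisson-kernel integral $\int_{\partial B(0,1)} h(y)(\mathcal P(u,y)-\mathcal P(v,y))\,\sigma(dy)$, expresses its variance as a double integral over $\partial B(0,1)\times\partial B(0,1)$ against the kernel $-\log|x-y|$, and then deduces the $O(|u-v|^2/r^2)$ bound from the continuity (in fact smoothness) of the Poisson kernel on $B(0,s)\times\partial B(0,1)$. You instead compute the covariance kernel of the harmonic part directly by subtracting the Dirichlet Green's function of the disk from the (unit-circle-normalized) whole-plane covariance on $B(0,1)\times B(0,1)$, obtaining $\mathrm{Cov}(\widehat h_{0,1}(u),\widehat h_{0,1}(v))=-\log|1-u\bar v|$, and then finish with the algebraic identity $|1-u\bar v|^2-(1-|u|^2)(1-|v|^2)=|u-v|^2$. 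Both approaches ultimately rest on the same two inputs (the Markov decomposition with independence, and the explicit form of the relevant Green's functions), but your closed-form computation has the advantage of producing the explicit constant $C(s)=(1-s^2)^{-2}$, which the paper's continuity argument does not.

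A couple of points worth tightening if you wanted to submit this: (i) the identity $\E[h(u)h(v)]=-\log|u-v|$ for $u,v\in B(0,1)$ is specific to the unit-circle normalization of $h$ (the general formula is $\log\frac{\max(|u|,1)\max(|v|,1)}{|u-v|}$), so you should state that explicitly before subtracting $G_{B(0,1)}$; and (ii) since the paper treats $\widehat h_{z,r}$ as a distribution modulo additive constant, your pointwise covariance $-\log|1-u\bar v|$ implicitly fixes a representative, but this is harmless because only the variance of the difference $\widehat h(u)-\widehat h(v)$ enters, which is normalization-independent — it would be cleanest to phrase the computation entirely in terms of $\mathrm{Var}(\widehat h(u)-\widehat h(v))=\mathrm{Var}(h(u)-h(v))-\mathrm{Var}(\widetilde h(u)-\widetilde h(v))$ at the level of kernels.
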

Estimates of this flavor are prevalent in the literature; see,~e.g., \cite[Lemma~3.10]{BDZ16}  for the 
analogous result in the context of a discrete GFF on a box and also \cite[Section~4.1]{werner2020lecture} 
which gives a weaker bound for the zero-boundary GFF on a proper subdomain of $\mathbb C$. Using it we 
can now finish the
\begin{proof}[Proof of Lemma~\ref{lem:max_dev}]
Let $\Delta_{z, r}(u, v)$ denote the difference $\widehat h_{z, r}(u) - \widehat h_{z, r}(v)$. In view of the 
variance bound \eqref{eq:squared_diff}, it follows from the Fernique's inequality \cite{Fer75} (see also 
\cite[Lemma~3.5]{BDZ16})  that
\begin{equation}\label{eq:max_exp_single_ball}
\E \, \big[  \sup_{u, v \in B(z, 7r/8)} \Delta_{z, r}(u, v) \big] \le C.
\end{equation}
This implies in particular,
\begin{equation}\label{eq:exp_max_sum_bnd}
	\begin{split}
		\mu \coloneqq \E \, &\big [ \sup_{(u_j, v_j) \, \in B(z_j, 7r_j/8)} \, \sum_{j} \,  \Delta_{z_j, r_j}(u_j, v_j) \big ] = \E \, \big [ \sum_{j} \, \sup_{(u_j, v_j) \, \in B(z_j, 7r_j/8)} \Delta_{z_j, r_j}(u_j, v_j) \big ]\\ &\le C n \le C' m \nu
	\end{split}
\end{equation}
where in the final step we used the bound $n \le 100 m \nu$ as a consequence of the definition of $(\lambda, 
\nu)$-balanced sets. Now suppose that we also have, for any choice of pairs of points $(u_j, v_j) 
\in B(z_j, 7r_j/8)$,
\begin{equation}\label{eq:variance_of_sum}
	v \coloneqq \var\,\big[\, \sum_{j} \Delta_{z_j, r_j}(u_j, v_j)   \big] \le C(\lambda_0)  m \nu^{3/2} \log \e \nu.
\end{equation}
Then, using the Borell-Tsirelson inequality \cite{Bor75, SCs74} (see, e.g., \cite[Theorem~2.1.1]{AT07}) we 
get
\begin{equation}\label{eq:borel_tsierl}
	\P\,\big[  \sum_{j} \, \sup_{(u_j, v_j) \, \in B(z_j, 7r_j/8)} \Delta_{z_j, r_j}(u_j, v_j) \ge 
	\mu + k\big] \le \e^{- k^2/2v} \le \e^{- c(\lambda_0)\, \frac{k^2}{m \nu^{3/2} \log \e \nu}}
\end{equation}
for all $k \ge 0$. Setting $M = 4C$ where $C$ is from \eqref{eq:exp_max_sum_bnd}, we can deduce from this:
\begin{equation*}
	\begin{split}
		\P[ |\mathcal S| \le n/2] &= \P \Big[  \# \big\{j:   \sup_{(u_j, v_j) \, \in B(z_j, 7r_j/8)} \, \sum_{j} \,  \Delta_{z_j, r_j}(u_j, v_j) \ge 4C\big\} > n/2 \Big] \\
		& \le \P\,\big[  \sum_{j} \, \sup_{(u_j, v_j) \, \in B(z_j, 7r_j/8)} \Delta_{z_j, r_j}(u_j, v_j) \ge 2 \mu + Cn  \big] \le \e^{c(\lambda_0)\frac{n^2}{m \nu^{3/2} \log \e \nu}} \le \e^{-c(\lambda_0) \frac{m \sqrt{\nu}}{ \log \e \nu}}
	\end{split}
\end{equation*}
where in the last step used the lower bound $n \ge m \nu / 100$ as implied by Definition~\ref{def:goodcollection}.

\medskip

Let us now verify the bound	in \eqref{eq:variance_of_sum} to which end we start by expanding
\begin{equation}\label{eq:var_expand}
	\begin{split}
		\var&\, \big[  \sum_{j} \Delta_{z_j, r_j}(u_j, v_j) \big] \\
		&\le \sum_{j} \var [ \Delta_{z_j, r_j}(u_j, v_j)] + 2\sum_{i \in [1, m]}\,\sum_{j \in S_i}\, \sum_{\ell \ge i}\,\sum_{\substack{k \in S_{\ell}, \\ k \ne j}} \big | \cov[ \Delta_{z_j, r_j}(u_j, v_j),  \Delta_{z_k, r_k}(u_{k}, v_{k})] \big |
	\end{split}
\end{equation}
where $S_i \coloneqq \big\{j:  2r_j \in [L_0 \lambda^{-k_i}/100,  100 L_0\lambda^{-k_i}] \big\}$ (recall 
Definition~\ref{def:goodcollection}). It follows from \eqref{eq:squared_diff} that
\begin{equation}\label{eq:variance_bnd}
	\var [ \Delta_{z_j, r_j}(u_j, v_j) ] \le C. 
\end{equation}
For the covariance terms first observe that due to the well-separatedness of $B(z_j, r_j)$'s, we have $2d_{j, k} \coloneqq |z_j - z_k| \ge 2 (r_j \vee r_k)$ whenever $j \ne k$ and hence the two {\em disjoint} disks 
$B(z_j, d_{j, k})$ and $B(z_k, d_{j, k})$ contain $B(z_j, r_j)$ and $B(z_k, r_k)$ respectively. Therefore we 
can write
\begin{equation*}
	\Delta_{z_j, r_j}(u_j, v_j) = \widehat h_{z_j, r_j}(u_j) - \widehat h_{z_j, r_j}(v_j) = (\widehat h_{z_j, d_{j, k}}(u_j) - 
	\widehat h_{z_j, d_{j, k}}(v_j) )  +  ({\widetilde h}_{z_j, d_{j, k}; r_j}(u_j) - {\widetilde h}_{z_j, d_{j, k}; r_j}(v_j) )\end{equation*}
where $\widetilde h_{z_j, d_{j, k}; r_j}(w) \coloneqq \int_{\partial B(z_j, r_j)} \widetilde h_{z_j, d_{j, k}}(y) 
\mathcal P_{z_j, r_j}(w, y) \sigma_{z_j, r_j}(dy)$ is the harmonic average of the field $\widetilde h_{z_{j}, 
	d_{j, k}}$ at $w$ w.r.t. the circle $\partial B(z_j, r_j)$ with $\mathcal P_{z, r}(\cdot, \cdot)$ being the 
Poisson kernel for the disk $B(z, r)$ and $\sigma_{z, r}(\cdot)$ being the uniform distribution on the circle $\partial B(z, r)$. Similar decomposition holds for $\Delta_{z_k, r_k}(u_k, v_k)$ as well. These averages are well-defined just as the circle averages by the continuity of Poisson kernel. Now notice that the random variable $\Delta_{z, r}(u, v)$ is a 
difference of two averages involving $h$ and hence is well-defined without any normalization, see, e.g., 
\cite[Section~3.2]{SheffieldWelding2016}). Hence, by the independence between 
$\widetilde h_{z, r}$ and $\mathcal F_{z, r}$, we get in view of this decomposition
\begin{equation*}
	\begin{split}
		\cov[\Delta_{z_j, r_j}(u_j, v_j),  \Delta_{z_k, r_k}(u_k, v_k)] &= \cov[\widehat h_{z_j, d_{j, k}}(u_j) - \widehat h_{z_j, d_{j, k}}(v_j),\,  \widehat h_{z_j, d_{j, k}}(u_k) - \widehat h_{z_j, d_{j, k}}(v_k)].
	\end{split}
\end{equation*}
Bounding this by the Cauchy-Schwarz inequality and subsequently the resulting variance terms by 
Lemma~\ref{lem:variance_estimate} yields us
\begin{equation}\label{eq:covariance_bnd}
	\big|\,\cov[\Delta_{z_j, r_j}(u_j, v_j),  \Delta_{z_k, r_k}(u_k, v_k)] \, \big| \le C 	\frac{r_j r_k}{d_{j, k}^2}.
\end{equation}
We now go back to \eqref{eq:var_expand} and consider, for some $j \in S_i$ and $\ell \ge i$, the sum of 
the (absolute) covariance terms
\begin{equation*}
	\sum_{k \in S_\ell}\big | \cov[\Delta_{z_j, r_j}(u_j, v_j),  \Delta_{z_k, r_k}(u_k, v_k)] \big| \le \sum_{t \in \N_{>0}} \, \sum_{k \in 
		T_{t, j, \ell}} \big | \cov[\Delta_{z_j, r_j}(u_j, v_j),  \Delta_{z_k, r_k}(u_k, v_k)] \big | 
\end{equation*}
where $T_{t, j, \ell} \coloneqq \{k \in  S_{\ell} : A(z_j, r_j + (t-1)L_0\lambda^{-k_\ell}, r_j + t 
L_0\lambda^{-k_\ell}) \cap B(z_k, r_k) \ne \emptyset\}$. Since the disks $B(z_k, r_k)$'s are disjoint; and $j 
\in S_{i}$ and $k \in S_{\ell}$, it follows that
\begin{equation}\label{eq:Ttjlbnd}
	|T_{t, j, \ell}| \le C \, \frac{(r_j + t L_0 \lambda^{-k_\ell} ) L_0 \lambda^{-k_\ell}}{L_0^2 \lambda^{-2k_\ell}} 
	\le \Cl{Ttjlbnd} (\lambda^{k_\ell - k_i}  +  t).
\end{equation}
As to $\cov[\Delta_{z_j, r_j}(u_j, v_j), \Delta_{z_k, r_k}(u_k, v_k)]$, using the same bounds on $r_j$ and $r_k$, we get from \eqref{eq:covariance_bnd}
\begin{equation*}
	\max_{k \in T_{t, j, \ell}} \big |\, \cov[\Delta_{z_j, r_j}(u_j, v_j), \Delta_{z_k, r_k}(u_k, v_k)] \, \big | \le C \frac{\lambda^{k_{\ell} - k_i}}{(\lambda^{k_\ell - k_i} + t)^2}\,.
\end{equation*}
Since this bound is decreasing in $t$ and $|S_\ell| \le 100 \nu$ by Definition~\ref{def:goodcollection}, we 
can write in view of \eqref{eq:Ttjlbnd},
\begin{equation}\label{eq:covsumsmallell}
	\begin{split}
		\sum_{t \in \N_{>0}}\,\sum_{k \in T_{t, j, \ell}} &\big | \, \cov[\Delta_{z_j, r_j}(u_j, v_j),  \Delta_{z_k, r_k}(u_k, v_k)] \, \big |\\  
		&\le C\lambda^{k_\ell - k_i} \sum_{t = 1}^{t_\nu}\frac{1}{\lambda^{k_\ell - k_i} + t} \le C \lambda^{k_\ell - k_i} \log 
		\frac{\lambda^{k_\ell - k_i} + t_\nu}{\lambda^{k_\ell - k_i}}
	\end{split}
\end{equation}
where $t_\nu$ is the smallest integer satisfying $\Cr{Ttjlbnd}\sum_{t = 1}^{t_\nu} (\lambda^{k_\ell - k_i} + t) 
\ge 100 \nu$ provided $\Cr{Ttjlbnd}(\lambda^{k_\ell - k_i} + 1) \le 100 \nu$. On the other hand, when 
$\Cr{Ttjlbnd}(\lambda^{k_\ell - k_i} + 1) > 100 \nu$, we can instead use the naive bound
\begin{equation}\label{eq:covsumbigell}
	\sum_{t \in \N_{>0}}\,\sum_{k \in T_{t, j, \ell}} \big | \, \cov[\Delta_{z_j, r_j}(u_j, v_j),  \Delta_{z_k, r_k}(u_k, v_k)] \, \big |  \le C \nu 
	\frac{\lambda^{k_\ell - k_i}}{(\lambda^{k_\ell - k_i} + 1)^2} \le C\nu \lambda^{-(k_\ell - k_i)}.
\end{equation}
Let us further analyze the bound in \eqref{eq:covsumsmallell}. It is clear from the definition of $t_\nu$ 
that $\Cr{Ttjlbnd}\sum_{t = 1}^{t_\nu} (\lambda^{k_\ell - k_i} + t) \le 200 \nu$ and hence $\lambda^{k_\ell - 
	k_i} t_\nu \le C \nu$. Consequently, we obtain from \eqref{eq:covsumsmallell}:
\begin{equation}\label{eq:covsumsmallell2}
	\begin{split}
		\sum_{t \in \N_{>0}}\,\sum_{k \in T_{t, j, \ell}} &\big | \, \cov[\Delta_{z_j, r_j}(u_j, v_j),  \Delta_{z_k, r_k}(u_k, v_k)] \, \big | \\ 
		&\le C \lambda^{k_\ell - k_i} \log (1  + C \nu \lambda^{-2(k_\ell -k_i)} ) \le C \nu \lambda^{-(k_\ell - k_i)}
	\end{split}
\end{equation}
where in the final step we bounded $\log (1 + x)$ by $x$. Notice that this bound is good as soon as $\nu 
\lambda^{-2(k_\ell - k_i)} \le 1$, i.e., $\nu \le \lambda^{2(k_\ell - k_i)}$ and we can bound the log term 
by $\log \e \nu$ otherwise. Using these bounds in different `regimes'' based on the value of $k_\ell$, we get 
(we suppress the constant prefactor $C$ by using ``$\preceq$'' instead of ``$<$'' in all steps)
\begin{equation*}
	\begin{split}
		\sum_{\ell \ge i}\,\sum_{k \in S_{\ell}} & \big | \, \cov[ \Delta_{z_j, r_j}(u_j, v_j),  \Delta_{z_k, r_k}(u_{k}, v_{k})] \, \big | \\ &\preceq 
		\sum_{\lambda^{2(k_\ell - k_i)} < \nu} \lambda^{k_\ell - k_i} \log \e \nu + \sum_{  \nu \le \lambda^{2(k_\ell - 
				k_i)}} \nu \lambda^{-(k_\ell - k_i)} \le C(\lambda_0) \sqrt{\nu} \log \e \nu
	\end{split}	
\end{equation*}
(recall that $\lambda \ge \lambda_0 > 1$). Plugging this as well as \eqref{eq:variance_bnd} into 
\eqref{eq:var_expand}, we can conclude \eqref{eq:exp_max_sum_bnd} in view of the bound $|S_i| \le [c\nu, C \nu]$ as implied by Definition~\ref{def:goodcollection}.
\end{proof}

Finally we give
\begin{proof}[Proof of Lemma~\ref{lem:variance_estimate}]
Due to the scale and translation invariance of $h$ (see~\eqref{eq:scale_trans_invar}), it suffices to prove 
the estimate for $z = 0$ and $r = 1$ which will drop from all the notations below. Since $\widehat h(\cdot)$ 
is harmonic in $B(0, 1)$, we can write in the notations we introduced in the previous proof (see the 
discussion after \eqref{eq:variance_bnd}):
\begin{equation*}
	\begin{split}
		\widehat{h}(u) - \widehat{h}(v) = \int_{\partial B(0, 1)} h(y)  ( \mathcal P(u, y) -  \mathcal P(v, y) ) \sigma(dy) .
	\end{split}	
\end{equation*}
As already noted, this average is well-defined even without any normalization and hence we have the following expression from the defining properties of the whole-plane GFF (see, 
e.g.~\cite[display~(3.4)]{SheffieldWelding2016}):
\begin{equation*}
	\begin{split}
		\E[ (\widehat{h}(u) - \widehat{h}(v))^2 ]  = \int_{\partial B(0, 1) \times \partial B(0, 1)}   ( \mathcal P(u, x) -  \mathcal P(v, x)) G(x, y)  ( \mathcal P(u, y) -  \mathcal P(v, y)) \sigma(dx) \sigma(dy)
	\end{split}
\end{equation*}
where $G(x, y) =  - \log |x - y|$. From this we can immediately deduce \eqref{eq:squared_diff} owing to the 
continuity of the Poisson kernel on $B(0, 1) \times \partial B(0, 1)$ (see, 
e.g.,~\cite[Theorem~3.44]{morters2010brownian}).
\end{proof}

\section{Length of LFPP geodesics} \label{sec:LFPP}
In this section we will prove Theorem~\ref{thm:LFPP_length}. Like in the case of Theorem~\ref{thm:main}, 
the principal tool is the following analogue of Proposition~\ref{prop:crossingprob}.
\begin{propbis}{prop:crossingprob}\label{prop:crossingprobbis}
For any $\lambda_0 > 1$ and $\xi > 0$, there exist $\rho = \rho(\lambda_0, \xi), \varepsilon_0 = \varepsilon_0(\xi) \in (0, 1)$ 
and a positive absolute constant $C$ such that the following holds. For any collection of disks $\{A_j\}_{j = 1}^n$ that is $(\lambda, \nu)$-balanced for some $\lambda \ge 
\lambda_0$ and $\nu > 1$ and has maximum and minimum radius in $[32\varepsilon^{1/2}, 1]$, one has
\begin{equation}\label{eq:crossingprobbis}
\P[ \text{a $\widehat{D}_h^{\varepsilon}$-geodesic crosses $A_{j, \circ}$ for all $j$} \,] \le C \rho^{\frac{m 
	\sqrt{\nu}}{\log \e \nu}}
\end{equation}
whenever $\varepsilon \in (0,  \varepsilon_0)$.
\end{propbis}
In order to deduce Theorem~\ref{thm:LFPP_length} from Proposition~\ref{prop:crossingprobbis}, we will 
need a ``finite-length'' analogue of Theorem~\ref{thm:hausdorff} which we now state. Let us 
recall the notion of straight runs and $(\lambda, k_0)$-sparsity from Section~\ref{subsec:LQG_geod_intro}. 
We call the straight runs of a path $P$ as $(\lambda, k_0)$-sparse {\em down to scale $\delta$} if $P$ does not exhibit any nested collection of straight runs on a sequence of scales $L_{k_1} > \cdots > L_{k_m}$ with 
$L_k \coloneqq L_0 \lambda^{-k}$, $L_{k_m} \ge \delta$ and  $m \ge \frac12 \max\{k_m, k_0\} = \frac12 
k_m$.
\begin{lemma}\label{lem:length_lower_bnd}
Let the distance between the endpoints of a given path $P$ be at least $L_0$. Also suppose that its straight runs are $(\lambda, k_0)$-sparse down to scale $\delta$ for some $\delta \in 
(0, L_0)$ and $k_0 > 0$. Then there exist $s = s(\lambda) > 0$ and $c = c(\lambda, L_0) > 0$ such that the 
Euclidean length of $P$ is at least $c\, \delta^{-s}$.
\end{lemma}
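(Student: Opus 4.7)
The plan is to adapt the box-counting argument underlying Theorem~\ref{thm:hausdorff} to the finite-scale setting. The key observation is that whenever the Aizenman--Burchard exponent $s$ produced there is strictly greater than $1$, the sparsity hypothesis forces a number of ``separated visits'' of $P$ at scale $\delta$ that grows faster than $1/\delta$, which in turn yields a polynomial-in-$1/\delta$ lower bound on the Euclidean length.

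First, fix an integer $p$ with $p < \lambda < \sqrt{p(p+1)}$ and set $s = \log\sqrt{p(p+1)}/\log\lambda$, so that $s>1$, exactly as in the derivation of Theorem~\ref{thm:main} from Theorem~\ref{thm:hausdorff}; the lemma is to be applied with $\lambda$ chosen so that such a $p$ exists. I would then revisit the inductive construction behind the proof of \cite[Theorem~5.1]{AizBur99}. Beginning with an $L_0 \times L_0$ square containing both endpoints of $P$ (which exists since their Euclidean distance is at least $L_0$) and recursively subdividing scale by scale, the absence of a nested sparse sequence of straight runs forces every rectangle at scale $L_k$ visited by $P$ to spawn at least $\sqrt{p(p+1)} = \lambda^s$ children at scale $L_{k+1}$ that $P$ also visits, with these children being pairwise separated by a Euclidean distance proportional to $L_{k+1}$. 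Crucially, the inductive step at level $k$ invokes the sparsity hypothesis only at scales $L_{k'}$ with $k' \leq k+1$, so the recursion survives truncation at the largest integer $k^{\ast}$ with $L_{k^\ast} \geq \delta$, provided $k^\ast \geq k_0$ (if $k^\ast < k_0$ the target bound is vacuous and can be absorbed into the constant).

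After $k^\ast$ steps, with $k^\ast$ of order $\log(L_0/\delta)/\log\lambda$, we thus obtain at least
\[
N_{k^\ast} \;\geq\; c_1\, \lambda^{k^\ast s} \;\geq\; c_1'\, (L_0/\delta)^{s}
\]
disjoint rectangles of side $L_{k^\ast}$ visited by $P$ and pairwise separated by a constant multiple of $L_{k^\ast}$. Picking one point of $P$ inside each such rectangle furnishes $N_{k^\ast}$ points along $P$ that are pairwise separated by a constant multiple of $L_{k^\ast}$, so the Euclidean length of $P$ satisfies
\[
\mathrm{len}(P) \;\geq\; c_2\,(N_{k^\ast}-1)\, L_{k^\ast} \;\geq\; c_3\, \lambda^{k^\ast s}\, L_{k^\ast} \;\geq\; c(\lambda, L_0)\, \delta^{-(s-1)},
\]
and the lemma follows with exponent $s_{\mathrm{lemma}} = s-1 > 0$.

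The main obstacle is the bookkeeping required to extract a genuinely finite-scale covering-number estimate from an argument originally designed to deliver only a Hausdorff-dimension conclusion. Specifically, one has to verify that the inductive growth $N_{k+1}/N_k \geq \sqrt{p(p+1)}$ accumulates to the sharp exponent $\lambda^{k^\ast s}$ under truncation at depth $k^\ast$, and that the surviving rectangles really supply genuinely separated visits of $P$ so that the length lower bound is additive across them. Both features are essentially built into the construction in \cite{AizBur99}; the task is to recast it as a clean quantitative statement at a prescribed finite depth, with some additional care regarding endpoint effects near the scales $L_0$ and $\delta$.
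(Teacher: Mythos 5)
Your proposal is essentially the same as the paper's proof: truncate the Aizenman--Burchard nested-collection construction at the coarsest depth $k^{\ast}$ with $L_{k^{\ast}}\ge\delta$, deduce the cardinality bound $N_{k^{\ast}}\gtrsim\beta^{k^{\ast}}$ with $\beta=\sqrt{p(p+1)}$, and convert to a length bound of order $N_{k^{\ast}}\cdot L_{k^{\ast}}\gtrsim\delta^{-(s-1)}$, matching the paper's exponent $\log(\beta/\lambda)/\log\lambda$. One phrasing issue worth fixing: the branching factor at each individual step is $p$ or $p+1$ (not $\sqrt{p(p+1)}$); the geometric mean $\beta$ only appears after aggregating over all $k^{\ast}$ scales via the sparsity hypothesis, exactly as in the paper's product inequality $\prod_j n_j(\eta)\ge\beta^{k_{\max}}$, so your per-rectangle claim should be replaced by that aggregate statement, which you do in fact state and use in the next line.
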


\begin{proof}
Choose $p \in [\lambda/2, \lambda]$ such that $\beta \coloneqq \sqrt{p(p+1)}$ is {\em strictly} larger than $\lambda$. Now using the same algorithmic construction as given in the proof of 
\cite[Lemma~5.2]{AizBur99}, we get a nested sequence $\Gamma_0, \ldots, \Gamma_{k_{{\rm max}}}$ of 
collections of segments of $P$ such that:
\begin{itemize}
	\item each $\Gamma_k$ is a collection of segments of diameter at least $L_k$;
	\item in each generation (as defined by $k$), distinct segments are at distances at least $\varepsilon L_k $ with $\varepsilon = (\lambda/  p)-1$;
	\item each segment of $\Gamma_k\,(k > 1)$ is contained in one of the segments of $\Gamma_{k-1}$, with 
	the number of immediate descendants thus contained in a given element of $\Gamma_{k-1}$ at least $p$ 
	and is at least $p + 1$ {\em unless} it exhibits a straight run at the scale $L_k$
\end{itemize}
(here $k_{{\rm max}}\coloneqq \max\{k:L_{k} \ge \delta\}$). Now, for points $x \in \cup_{\eta \in \Gamma_k} \eta$, define $n_k(x)$ to be the number of immediate descendants of the set containing $x$ within 
$\Gamma_{k-1}$ \cite[Lemma~5.4]{AizBur99}. Then, using the identity \cite[display~(5.19)]{AizBur99}, we can write
\begin{equation*}
1 = \sum_{\eta \, \in \, \Gamma_{k_{{\rm max}}}}\,  \prod_{j = 1}^{k_{{\rm max}}} n_j(\eta)^{-1} \le \, |\Gamma_{k_{{\rm max}}}| \cdot \Big( \min_{\eta \in \Gamma_{k_{{\rm max}}}} \prod_{j = 1}^{k_{{\rm max}}} n_j(\eta)\Big)^{-1}
\end{equation*}
where the number $n_j(n)$ is the constant value that $n_j(x)$ takes for $x \in \eta$. However, since the straight runs of $P$ are $(\lambda, k_0)$-sparse down to scale $\delta$, it follows from the aforementioned 
items that $\prod_{j=1}^{k_{{\rm max}}}n_{j}(x)\ge\beta^{k_{{\rm max}}}$ (see also the proof of Theorem~5.1 in \cite{AizBur99}). Therefore, the previous display 
gives us $|\Gamma_{k_{{\rm max}}}| \ge \beta^{k_{{\rm max}}}$. From this we can deduce,
\begin{equation*}
|P| \ge |\Gamma_{k_{{\rm max}}}| \cdot L_{k_{{\rm max}}}\ge \Big(\frac{\beta}{\lambda}\Big)^{k_{{\rm max}}}L_{0} = c(\lambda, p, L_0)\delta^{-s}
\end{equation*}
with $s = c \log(\beta/\lambda)/\log(\lambda) > 0$ for some suitable absolute constant $c > 0$ upon noting 
that $L_{k_{{\rm max}}} <  \delta\lambda$.
\end{proof}
	
\smallskip

We can now finish the 
\begin{proof}[Proof of Theorem~\ref{thm:LFPP_length}]
The proof follows in essentially the same fashion from Proposition~\ref{prop:crossingprobbis} and 
Lemma~\ref{lem:length_lower_bnd} as did the proof of Theorem~\ref{thm:main} from  
Proposition~\ref{prop:crossingprob} and Theorem~\ref{thm:hausdorff} by controlling the probability of the 
sparsity of straight runs in LFPP geodesics down to scale $32\varepsilon^{1/2}$.
\end{proof}

\medskip

Finally, it remains to give the:
\begin{proof}[Proof of Proposition~\ref{prop:crossingprobbis}]
Let us consider the following redefinition of the events in \eqref{def:tildeF_H} for some $M > 0$ which will be 
later set to an absolute constant:
\begin{equation}\label{def:tildeF_H2}
\begin{split}
\widetilde F_{z, r} &\coloneqq 	\big\{ \widehat D_{\widetilde h_{z, r}}^{\varepsilon}(\text{around $A(z, 3r/4, 
7r/8)$})  \ge \e^{-\xi M} \widehat D_{\widetilde h_{z, r}}^{\varepsilon}(\text{across $A(z, r/2, 5r/8)$})\big\}, \text{ and}\\H_{z, r} & \coloneqq \big\{\sup_{u, v \in B(z, 15r/16)} \, |\widehat h_{z, r} (u) -  \widehat h_{z, r}(v) 
|  \ge M \big\}
\end{split}
\end{equation}
where $\widehat D_{\widetilde h_{z, r}}^{\varepsilon}$ is the LFPP metric constructed from $\widetilde h_{z, r}$ in the same way as $\widehat D_{h}^{\varepsilon}$ was constructed from $h$ 
(recall~\eqref{def:widehat_h_ep}). We will follow this convention of denoting LFPP metrics in the sequel. 
Similarly we redefine the event $F_{z, r}$   from the statement of Lemma~\ref{lem:crossingonedisk} (see 
also \ref{property1}) to involve $\widehat D_h^{\varepsilon}$-geodesics instead of $D_h$-geodesics.  In view of \eqref{eq:bnd_F}, 
Lemma~\ref{lem:tildeF_H} and Lemma~\ref{lem:max_dev} (with $H_{z, r}$ redefined as above for which the 
proof given in the previous section works similarly for a possibly new choice of the absolute constant $M$), it suffices to verify the properties 
\ref{property1}--\ref{property3} for  $\{\widetilde F_{z_j, r_j}\}_{j = 1}^n$ and $\{H_{z_j, r_j}\}_{j = 
	1}^n$ which we do in the remainder of the proof. A 
crucial observation to make here --- which will be used repeatedly in the sequel often without any explicit reference --- is that the random variable $\widehat h_{\varepsilon}^{\star}(z)$  is a {\em radially symmetric 
	integral} of (and therefore is measurable relative to) $h\vert_{B(z, \varepsilon^{1/2})}$ for all $z \in \mathbb C$. This is the main reason why we chose to work with 
$\widehat h_{\varepsilon}^{\star}$ instead of $h_{\varepsilon}^{\star}$.

\vspace{0.3cm}

\noindent{\em Property~\ref{property1}.}	This follows from the same reasoning as given in the proof of 
Lemma~\ref{lem:tildeF_H} for the same property except that in place of Weyl scaling, we can directly use the 
definition of the metric as given by \eqref{eq:D_h_ep} (with $\widehat h_{\varepsilon}^\star$ in place of 
$h_{\varepsilon}^\star$). For this last part of the argument, it is crucial that $F_{z, r}$ be measurable relative 
to $h\vert_{B(z, 15r/16)}$ which is true if $r \ge 32\varepsilon^{1/2}$.

\vspace{0.25cm}

\noindent{\em Property~\ref{property2}.} Again the same argument as in the proof of Lemma~\ref{lem:tildeF_H} works in view of the lower bound on the minimum radius of the disks.

\vspace{0.25cm}

\noindent{\em Property~\ref{property3}.} This part requires a delicate argument since, unlike in the case of 
Lemma~\ref{lem:tildeF_H}, we do {\bf not} have $\P[\widetilde F_{z, r}] = \P[\widetilde F_{0, 1}]$. The way to 
get around this is to use the {\em tightness} of the (re-scaled) LFPP metrics \cite{ding2020tightness}. 
However, to the best of our knowledge, there is no ``readily available'' tighness results for $\mathfrak 
a_{\varepsilon}^{-1} \widehat D_{\widetilde h_{z, r}}^{\varepsilon}(\text{across/around
$A(z, \cdot, \cdot)$})$ ($\varepsilon \in (0, 1)$) in particular (see 
Section~\ref{subsec:LFPP_intro} for $\mathfrak a_{\varepsilon}$) and some work is necessary to {\bf transfer} the tightness bounds  from $\mathfrak a_{\varepsilon}^{-1}  D_{h}^{\varepsilon}(\text{across/around A}) $ to these 
random variables. Consequently, in the 
course of our proof, we will switch between and compare events similar to $\widetilde F_{z, r}$ defined for 
several LFPP metrics and hence it would be convenient to have a generic notation for such events. To this 
end let us define, for any random, {\em intrinsic} metric $D$ defined on $B(z, r)$ and prefactor ${\rm C} > 0$,
\begin{equation*}
\widetilde F_{z, r}(D, {\rm C}) \coloneqq 	\big\{ D(\text{around $A(z, 3r/4, 7r/8)$})  
\ge {\rm C}\, D(\text{across $A(z, r/2, 5r/8)$})\big\}
\end{equation*}
(cf.~the definition of $\widetilde F_{z, r}$ in \eqref{def:tildeF_H2}). 

\vspace{0.17cm} 

Now suppose for the moment being that we also have,
\begin{equation}\label{eq:tildeFlocal}
\P\Big [ \widetilde F_{z, r}\Big(\widehat D_{\widetilde h_{z, r}}^{\varepsilon},\, \Cl{prefact}(\xi) \Big) \Big] \le \frac 12
\end{equation}
for all $r \in (32 \varepsilon^{1/2}, 1)$, $\varepsilon \in (0, c(\xi))$ and some $\Cr{prefact}(\xi) > 1$. 
In order to verify \ref{property3}, we 
need to obtain a non-trivial  upper bound (depending only on $\xi$) for the probability of $\widetilde F_{z, r}\big(\widehat D_{\widetilde h_{z, 
r}}^{\varepsilon},\, \e^{-\xi M} \big)$. To this end, let us consider a non-negative, radially 
symmetric (bump) function $\phi \in C_c^{\infty}(\mathbb C)$ supported in $B(0, 23/32)$ which is identically 
equal to 1 on $B(0, 21/32)$.  Then using similar argument as that for the upper bound on $\P[G_{0, 1}]$ in 
the proof of Lemma~\ref{lem:crossingonedisk} and the fact that $\widehat {\mb h}_{\varepsilon}^\star(z)$ is determined by $\mb h\vert_{B(z, \varepsilon^{1/2})}$ for any distribution $\mb h$ (provided 
all the distributional pairings are well-defined), we get a constant $\Cl{prefact2} = 
\Cr{prefact2}(\Cr{prefact}, M) > 0$ such that
\begin{equation}\label{eq:hatDh_event}
\widetilde F_{z, r}\Big(\widehat D_{\widetilde h_{z, r}}^{\varepsilon},\, \e^{-\xi M} \Big)  = \widetilde F_{z, 
r}\Big(\widehat D_{\widetilde h_{z, r} + \Cr{prefact2}\, \phi(r^{-1} (\cdot\,  - \, z))}^{\varepsilon},\, \Cr{prefact} \Big)
\end{equation}
whenever $r \ge 32\varepsilon^{1/2}$. Denoting by $\mb P$ and $\widetilde{\mb P}$ the laws of $\widetilde 
h_{z, r}$ and $\widetilde h_{z, r} + \Cr{prefact2}\,\phi(r^{-1} (\cdot\,  - \, z)$ respectively, let us recall the following fact which is a 
consequence of Jensen's inequality, see e.g.~
the discussion following (2.7) in \cite{BDZ95} for a proof. For any event $A$ with positive 
$\widetilde{\mb P}$-probability, one has
\begin{equation}\label{eq:gen.entrop.lower}
	\mb P[A]\geq \widetilde{\mb P}[A]e^{-(1/\widetilde{\mb P}[A])(H(\widetilde{\mb P}|\mb P)+1/e)}.
\end{equation}
where $H(\widetilde{\mb P}|\mb P):=\widetilde{\mb E}\left[\log\frac{d\widetilde{\mb P}}{d\mb P}\right]$ is the 
relative entropy of $\widetilde{\mb P}$ with respect to $\mb P$. The Radon-Nikodym derivative 
$\frac{d\widetilde{\mb P}}{{d\mb P}}$, in this case, is given by the formula (see, 
e.g.~\cite[Proposition~3.4]{MillerSheffield_Imaginary12016})
\begin{equation*}
\frac{d\widetilde{\mb P}}{{d\mb P}} = \exp \left( (h, \phi_{z, r})_{\nabla} - \frac 12 (\phi_{z, r}, \phi_{z, r})_{\nabla} \right)
\end{equation*}
where $(f, g)_{\nabla} \coloneqq \int_{\mathbb C} \nabla f \cdot \nabla g \, d^2 z$ is the Dirchlet inner 
product and $\phi_{z, r}(\cdot) \coloneqq \Cr{prefact2}\, \phi(r^{-1} (\cdot\,  - \, z))$. From this we can now compute the relative entropy in terms of $(\phi_{z, r}, \phi_{z, r})_{\nabla}$ as follows:
\begin{equation*}
H(\widetilde{\mb P}|\mb P) = \widetilde {\mb E}	[ (\mb h, \phi_{z, r})_{\nabla} - \frac 12 (\phi_{z, r}, \phi_{z, 
r})_{\nabla} ]  = {\mb E}	[ (\mb h + \phi_{z, r}, \phi_{z, r})_{\nabla} - \frac 12 (\phi_{z, r}, \phi_{z, r})_{\nabla} ] = \frac 12 (\phi_{z, r}, \phi_{z, r})_{\nabla}
\end{equation*}
(this is a special case of the famous Cameron-Martin theorem, see, e.g.~\cite[Theorem~14.1]{janson_1997}). 
By standard change of variable one gets that $(\phi_{z, r}, \phi_{z, r}) _{\nabla} = \Cr{prefact2}^2\,(\phi, \phi)_{\nabla}$ and 
hence applying \eqref{eq:gen.entrop.lower} to the event $\widetilde F_{z, r}(\widehat D_{\mb h}^{\varepsilon}, \Cr{prefact})^c$, we obtain from \eqref{eq:tildeFlocal} and \eqref{eq:hatDh_event}
\begin{equation*}
\P\big[\widetilde F_{z, r}\big(\widehat D_{\widetilde h_{z, r}}^{\varepsilon},\, \e^{-\xi M} \big) \big] \le c(\xi) < 1
\end{equation*}
which yields \ref{property3}.

\vspace{0.17cm} 

Let us now show \eqref{eq:tildeFlocal} for which we again start with an intermediate statement. So suppose that 
\begin{equation}\label{eq:tildeFfull}
\P\big [ \widetilde F_{z, r}\big(\widehat D_{h}^{\varepsilon},\, \Cl{prefactfull}(\xi) \big) \big] \le \frac 14
\end{equation}
for some $\Cr{prefactfull}(\xi) > 0$, all $r \in (\varepsilon, 1)$ 
and $\varepsilon \in (0, c(\xi))$. From this, \eqref{eq:tildeFlocal} immediately follows by controlling the fluctuation of the harmonic field $\widehat h_{z, r}$ on $B(z, 15r/16)$ using 
Lemma~\ref{lem:variance_estimate}, \eqref{eq:max_exp_single_ball} and the Borell-Tsierlson inequality 
(see~\eqref{eq:borel_tsierl}). So we focus on \eqref{eq:tildeFfull} in the remaining part of the proof. 

\vspace{0.17cm} 

Continuing in the same vein, we first note that
\begin{equation}\label{eq:tildeFDh}
\P\big [ \widetilde F_{z, r}\big(D_{h}^{\varepsilon},\, \Cl{prefactDh}(\xi) \big) \big] \le \frac 18
\end{equation}
for some $\Cr{prefactDh}(\xi) > 0$ and all $r > \varepsilon$ as the the random variables $$\mathfrak 
a_{\delta}^{-1}D_h^{\delta}(\text{around $A(3/4, 7/8)$}) \text{ and } \big(\mathfrak 
a_{\delta}^{-1}D_h^{\delta}(\text{across $A(1/2, 5/8)$})\big)^{-1}$$ are tight for $\delta \in (0, 1)$ \cite[Proposition~4.1]{ding2020tightness}.  Let us explain the connection between these two in more detail. 
It follows from the definition of $D_h^{\varepsilon}$ that 
\begin{equation*}
D_h^{\varepsilon}(rz, rw) =  r D_{h(r\cdot)}^{\varepsilon/r}(z, w), \quad \forall z, w \in \mathbb C
\end{equation*}
(\cite[Lemma~2.6]{DubFalcGwynPfeSun20}). Combined with the scale and translation invariance of $h$ \eqref{eq:scale_trans_invar}, this yields \eqref{eq:tildeFDh} in view of the above-mentioned tightness of $D_h^{\delta}$ over $\delta \in (0, 1)$ (see also \cite[Definition~1.6 -- Axiom~V]{pfeffer2021weak}).

\vspace{0.17cm} 

Let us now finish the proof of \eqref{eq:tildeFfull} which follows immediately from \eqref{eq:tildeFDh} and the 
following (local) uniform comparison result proved in \cite[Lemma~2.1]{DubFalcGwynPfeSun20}:

\vspace{0.1cm}

Almost surely, $\lim_{\delta \to 0} \tfrac{\widehat 
	D_h^{\delta}(z, w; B(0, 1))}{D_h^{\delta}(z, w; B(0, 1))} = 1$ uniformly over all $z, w \in B(0, 1)$ 
($z \ne w$). 
\end{proof}

\bibliographystyle{amsalpha}
\newcommand{\etalchar}[1]{$^{#1}$}
\providecommand{\bysame}{\leavevmode\hbox to3em{\hrulefill}\thinspace}
\providecommand{\MR}{\relax\ifhmode\unskip\space\fi MR }
\providecommand{\MRhref}[2]{%
	\href{http://www.ams.org/mathscinet-getitem?mr=#1}{#2}
}
\providecommand{\href}[2]{#2}

\end{document}